\providecommand{\U}[1]{\protect\rule{.1in}{.1in}}
\newtheorem{theorem}{Theorem}
\theoremstyle{plain}
\newtheorem{corollary}{Corollary}
\newtheorem{definition}{Definition}
\newtheorem{example}{Example}
\newtheorem{remark}{Remark}
\numberwithin{equation}{section}
\begin{document}
\title[Garsia-Rodemich Spaces]{Garsia-Rodemich Spaces: Bourgain-Brezis-Mironescu space, embeddings and
rearrangement invariant spaces}
\author{Mario Milman}
\address{Instituto Argentino de Matematica}
\email{mario.milman@gmail.com}
\urladdr{https://sites.google.com/site/mariomilman/}
\thanks{The author was partially supported by a grant from the Simons Foundation (%
%TCIMACRO{\TEXTsymbol{\backslash}}%
%BeginExpansion
$\backslash$%
%EndExpansion
\#207929 to Mario Milman)}
\subjclass{46E30, 46E35}
\keywords{$BMO,$ Marcinkiewicz spaces, embeddings, rearrangement invariant spaces.}

\begin{abstract}
We extend the construction of Garsia-Rodemich spaces in different directions.
We show that the new space \textbf{B,} introduced by Bourgain-Brezis-Mironescu
\cite{bbm}, can be described via a suitable scaling of the Garsia-Rodemich
norms. As an application we give a new proof of the embeddings $BMO\subset$
\textbf{B }$\subset$ $L(n^{\prime},\infty).$ We then generalize the
Garsia-Rodemich construction and introduce the $GaRo_{X}$ spaces associated
with a rearrangement invariant space $X,$ in such a way that $GaRo_{X}=X,$ for
a large class of rearrangement invariant spaces. The underlying inequality for
this new characterization of rearrangement invariant spaces is an extension of
the rearrangement inequalities of \cite{milbmo}. We introduce Gagliardo
seminorms adapted to rearrangement invariant spaces and use our generalized
Garsia-Rodemich construction to prove Fractional Sobolev inequalities in this context.

\end{abstract}
\maketitle

\section{Introduction}

In their celebrated paper \cite{jn}, John-Nirenberg introduced the
space\footnote{For definiteness, from now on $Q_{0}$ will denote the unit cube
$(0,1)^{n}.$} $BMO(Q_{0}),$ and established the exponential integrability of
functions in $BMO(Q_{0}).$ To complement their result on $BMO$ functions,
John-Nirenberg introduced the $JN_{p}(Q_{0})$ spaces$,$which provide a scale
of conditions on the oscillation of functions: For $1<p<\infty,$ let%
\[
JN_{p}(Q_{0}):=JN_{p}=\{f\in L^{1}(Q_{0}):\left\Vert f\right\Vert _{JN_{p}%
}<\infty\},
\]
where\footnote{In this paper we assume that all subcubes have sides parallel
to the coordinate axes and we let $f_{Q}=\frac{1}{\left\vert Q\right\vert
}\int_{Q}f$ .}%
\begin{equation}
\left\Vert f\right\Vert _{JN_{p}}=\sup_{\{Q_{i}\}_{i\in I}\in P}\left\Vert
f\right\Vert _{JN_{p}}=\sup_{\{Q_{i}\}_{i\in I}\in P}\left\{
%TCIMACRO{\dsum \limits_{i\in I}}%
%BeginExpansion
{\displaystyle\sum\limits_{i\in I}}
%EndExpansion
\left(  \left\vert Q_{i}\right\vert ^{\frac{1}{p}-1}\int_{Q_{i}}\left\vert
f(x)-f_{Q_{i}}\right\vert dx\right)  ^{p}\right\}  ^{1/p}, \label{definida}%
\end{equation}

\[
P=\{\{Q_{i}\}_{i\in I}:\{Q_{i}\}_{i\in I}\text{ countable subcubes of }%
Q_{0}\text{ with pairwise disjoint interiors}\}.
\]
Then, we see that (cf. \cite[pag 423]{jn})
\[
\lim_{p\rightarrow\infty}\left\Vert f\right\Vert _{JN_{p}}=\left\Vert
f\right\Vert _{BMO}.
\]
John-Nirenberg \cite{jn} then proceeded to obtain the corresponding
intermediate integrability results for $JN_{p}$ functions, which we formulate
here as embeddings: For $1<p<\infty,$ we have%
\begin{align}
JN_{p}  &  \subset L(p,\infty),\label{delaintro3}\\
\text{ \ \ \ \ }\left\Vert f-f_{Q_{0}}\right\Vert _{L(p,\infty)}  &  \leq
c_{p}\left\Vert f\right\Vert _{JN_{p}},\nonumber
\end{align}
where $L(p,\infty)$ denotes the Marcinkiewicz weak type $L^{p}$
space\footnote{defined by the condition%
\[
\left\Vert f\right\Vert _{L(p,\infty)}=\sup_{t}t\left\vert \{\left\vert
f\right\vert >t\}\right\vert ^{1/p}<\infty.
\]
}, and $c_{p}$ is an absolute constant that does not depend on $f$.

We note that, as $p\rightarrow\infty,$ the correct limiting Marcinkiewicz
condition is the exponential class and the resulting limiting inequality is
one of the possible formulations of the celebrated John-Nirenberg
Lemma\footnote{The natural condition in our context is via the
Bennett-DeVore-Sharpley space \textquotedblleft weak $L^{\infty}"$ defined by
$L(\infty,\infty)=\{f:\left\Vert f\right\Vert _{L(\infty,\infty)}=\sup
_{t>0}\left(  f^{\ast\ast}(t)-f^{\ast}(t)\right)  <\infty\}.$ In fact, this
space is exactly the rearrangement invariant hull of $BMO$ (cf. \cite{bds}).
For a recent account of this part of the story we refer to \cite{corita}.}.

Garsia-Rodemich \cite{garro} introduced a different scale of conditions. Let
$1<p\leq\infty$, then we define,
\[
GaRo_{p}:=GaRo_{p}(Q_{0})=\{f:\left\Vert f\right\Vert _{GaRo_{p}}<\infty\},
\]
where%
\begin{equation}
\left\Vert f\right\Vert _{GaRo_{p}}=\sup_{\{Q_{i}\}\in P}\frac{%
%TCIMACRO{\dsum \limits_{i\in I}}%
%BeginExpansion
{\displaystyle\sum\limits_{i\in I}}
%EndExpansion
\frac{1}{\left\vert Q_{i}\right\vert }\int_{Q_{i}}\int_{Q_{i}}\left\vert
f(x)-f(y)\right\vert dxdy}{\left(
%TCIMACRO{\dsum \limits_{i\in I}}%
%BeginExpansion
{\displaystyle\sum\limits_{i\in I}}
%EndExpansion
\left\vert Q_{i}\right\vert \right)  ^{1/p^{\prime}}}. \label{labuena}%
\end{equation}
The main result about the $GaRo_{p}$ spaces is given by the following (cf.
\cite{garro} for the one dimensional case, \cite{milbmo} for the
$n-$dimensional case): As sets,%
\begin{equation}
GaRo_{p}=\left\{
\begin{array}
[c]{cc}%
L(p,\infty), & 1<p<\infty\\
BMO & \text{if \ }p=\infty
\end{array}
\right.  . \label{delaintro2}%
\end{equation}
In fact, the underlying inequalities can be quantified (cf. \cite{milbmo},
\cite{mil1}). Let $1<p\,<\infty,$ then,%
\begin{equation}
\left\Vert f\right\Vert _{GaRo_{p}}\leq\frac{2p}{p-1}\left\Vert f-f_{Q_{0}%
}\right\Vert _{L(p,\infty)}, \label{belaprima}%
\end{equation}%
\begin{equation}
\left\Vert f-f_{Q_{0}}\right\Vert _{L(p,\infty)}\leq c(n,p)\left\Vert
f\right\Vert _{GaRo_{p}}. \label{bela}%
\end{equation}
Likewise, for $p=\infty,$ we have (cf. \cite{mil1})%
\begin{equation}
\left\Vert f\right\Vert _{GaRo_{\infty}}\simeq\left\Vert f\right\Vert _{BMO},
\label{belas}%
\end{equation}
and (cf. \cite{bds})%
\begin{equation}
\left\Vert f-f_{Q_{0}}\right\Vert _{L(\infty,\infty)}\leq c(n)\left\Vert
f\right\Vert _{BMO}. \label{belast}%
\end{equation}
It is easy to see that\footnote{Indeed, for any $\{Q_{i}\}_{i\in I}\in P$ we
have
\begin{align*}%
%TCIMACRO{\dsum \limits_{i\in I}}%
%BeginExpansion
{\displaystyle\sum\limits_{i\in I}}
%EndExpansion
\frac{1}{\left\vert Q_{i}\right\vert }\int_{Q_{i}}\int_{Q_{i}}\left\vert
f(x)-f(y)\right\vert dxdy  &  \leq2%
%TCIMACRO{\dsum \limits_{i\in I}}%
%BeginExpansion
{\displaystyle\sum\limits_{i\in I}}
%EndExpansion
\int_{Q_{i}}\left\vert f(x)-f_{Q_{i}}\right\vert dx\\
&  =2%
%TCIMACRO{\dsum \limits_{i\in I}}%
%BeginExpansion
{\displaystyle\sum\limits_{i\in I}}
%EndExpansion
\left\vert Q_{i}\right\vert ^{1/p^{\prime}}\left(  \left\vert Q_{i}\right\vert
^{1/p}\frac{1}{\left\vert Q_{i}\right\vert }\int_{Q_{i}}\left\vert
f(x)-f_{Q_{i}}\right\vert dx\right) \\
&  \leq2\left(
%TCIMACRO{\dsum \limits_{i\in I}}%
%BeginExpansion
{\displaystyle\sum\limits_{i\in I}}
%EndExpansion
\left\vert Q_{i}\right\vert \right)  ^{1/p^{\prime}}\left\{
%TCIMACRO{\dsum \limits_{i\in I}}%
%BeginExpansion
{\displaystyle\sum\limits_{i\in I}}
%EndExpansion
\left\vert Q_{i}\right\vert \left(  \frac{1}{\left\vert Q_{i}\right\vert }%
\int_{Q_{i}}\left\vert f(x)-f_{Q_{i}}\right\vert dx\right)  ^{p}\right\}
^{1/p}.
\end{align*}
and (\ref{delaintro}) follows.}%
\begin{equation}
\left\Vert f\right\Vert _{GaRo_{p}}\leq2\left\Vert f\right\Vert _{JN_{p}}.
\label{delaintro}%
\end{equation}
Therefore, combining (\ref{bela}) and (\ref{delaintro}) gives a new proof the
John-Nirenberg embedding (\ref{delaintro3}), and combining (\ref{belas}) with
(\ref{belast}) gives the John-Nirenberg Lemma. Moreover, the Garsia-Rodemich
spaces are particularly well suited to study other important inequalities in
analysis, including Poincar\'{e}-Sobolev embeddings (cf. \cite{mil1}, and
Section \ref{secc::fractional} below), and the basic construction can be
extended to more general settings, e.g. metric spaces, doubling measures, etc.

In this paper we extend the Garsia-Rodemich construction and the scope of its
applications. We first show that the new space, \textbf{B,} introduced by
Bourgain-Brezis-Mironescu \cite{bbm} is closely connected to suitable scalings
of the Garsia-Rodemich norms. As an application we give a new streamlined
approach to the remarkable embedding obtained in \cite{bbm} (cf.
(\ref{ladebmo}) and Theorem \ref{ladetipodebil} \ below),
\begin{equation}
BMO\subset\mathbf{B\ }\subset L(n^{\prime},\infty). \label{delaintro4}%
\end{equation}

The description of the weak $L^{p}$ spaces (cf. (\ref{delaintro2})) via the
Garsia-Rodemich conditions raises a natural question: can one also describe
the $L^{p}$ spaces or other function spaces through Garsia-Rodemich
oscillation conditions? We show that this is indeed the case by means of
modifying a construction of certain martingale spaces, apparently first
introduced by Garsia \cite[$K_{p}^{+}$ spaces, page 165]{garsia}. Let
$X:=X(Q_{0})$ be a rearrangement invariant space (cf. Section
\ref{secc::general} below for background information).

\begin{definition}
\label{def::duca}We shall say that an integrable function $f$ belongs to
$GaRo_{X}$ if there exists $\gamma\in X$ such that for all $\{Q_{i}\}_{i\in
I}\in P$ it holds%
\begin{equation}%
%TCIMACRO{\dsum \limits_{i\in I}}%
%BeginExpansion
{\displaystyle\sum\limits_{i\in I}}
%EndExpansion
\frac{1}{\left\vert Q_{i}\right\vert }\int_{Q_{i}}\int_{Q_{i}}\left\vert
f(x)-f(y)\right\vert dxdy\leq%
%TCIMACRO{\dsum \limits_{i\in I}}%
%BeginExpansion
{\displaystyle\sum\limits_{i\in I}}
%EndExpansion
\int_{Q_{i}}\gamma(x)dx. \label{dulfa}%
\end{equation}
Let
\[
\Gamma_{f}^{X}=\{\gamma\in X:(\ref{dulfa})\text{ holds for all }%
\{Q_{i}\}_{i\in I}\in P\},
\]
and define
\[
\left\Vert f\right\Vert _{GaRo_{X}}=\inf\{\left\Vert \gamma\right\Vert
_{X}:\gamma\in\Gamma_{f}^{X}\}.
\]

\end{definition}

In Section \ref{secc::general} (cf. Theorem \ref{teogrande}) we show that for
rearrangement invariant spaces $X$ whose Boyd indices lie on $(0,1)$%
\footnote{In particular, this class of spaces includes the $L^{p}$ spaces and
the Marcinkiewicz spaces $L(p,\infty),$ $1<p<\infty$.}, we have%

\begin{equation}
GaRo_{X}=X. \label{bruni}%
\end{equation}
In particular, combining (\ref{bruni}) with (\ref{delaintro2}) it follows that%
\[
GaRo_{L(p,\infty)}=L(p,\infty)=GaRo_{p},1<p<\infty.
\]
The proof is based on a suitable extension of the rearrangement inequalities
of \cite{milbmo} (cf. Section \ref{secc::general}, Theorem \ref{teorrea},
below). There exists a constant $c=c(n)$ such that for all $f\in GaRo_{X}$ and
all $\gamma\in\Gamma_{f}^{X},$
\[
f^{\ast\ast}(t)-f^{\ast}(t)\leq c\gamma^{\ast\ast}(t),\text{ }t\in(0,\frac
{1}{4}).
\]

As a consequence, we can extend (\ref{belaprima}) and (\ref{bela}) to the
context of rearrangement invariant spaces (cf. (\ref{lola1}), (\ref{lola2}) below).

In Section \ref{secc::fractional} we use the Garsia-Rodemich conditions to
extend the connection between the space \textbf{B} and fractional Sobolev
embeddings obtained in \cite{bbm}. Let
\begin{equation}
W^{\alpha,p}=\{f:\left\Vert f\right\Vert _{W^{\alpha,p}}<\infty\},
\label{frac}%
\end{equation}
where
\begin{equation}
\left\Vert f\right\Vert _{W^{\alpha,p}}=\left\{  \int_{Q_{0}}\int_{Q_{0}}%
\frac{\left\vert f(x)-f(y)\right\vert ^{p}}{\left\vert x-y\right\vert
^{n+\alpha p}}dxdy\right\}  ^{1/p}. \label{normfrac}%
\end{equation}
Then\footnote{Commenting on an earlier version of this paper, where we had
assumed $p>1,$ Daniel Spector observed that our method of proof also yielded
the case $p=1$ (cf. Remark \ref{daniel} below).} (cf. Theorem \ref{alvarado}
and Remark \ref{daniel} below),
\begin{equation}
W^{\alpha,p}\subset GaRo_{q},1\leq p\leq\frac{n}{\alpha},\frac{1}{q}=\frac
{1}{p}-\frac{\alpha}{n}. \label{srbenson}%
\end{equation}
The generalized Garsia-Rodemich construction can be used to give a far
reaching extension of (\ref{srbenson}) to the setting of rearrangement
invariant spaces. To implement this program we introduce Gagliardo seminorms
adapted to rearrangement invariant spaces as follows. Let $\alpha
\in(0,1),1<p<\infty,$ we formally define
\begin{equation}
D_{p,\alpha}(f)(y)=\left\{  \int_{Q_{0}}\frac{\left\vert f(x)-f(y)\right\vert
^{p}}{\left\vert x-y\right\vert ^{n+\alpha p}}dx\right\}  ^{1/p},y\in Q_{0}.
\label{deformad}%
\end{equation}
Given a rearrangement invariant space $Y:=Y(Q_{0})$ we consider the spaces
defined by%
\begin{equation}
W_{p,Y}^{\alpha}:=W_{p,Y}^{\alpha}(Q_{0}):=\{f:\left\Vert f\right\Vert
_{W_{p,Y}^{\alpha}}=\left\Vert D_{p,\alpha}(f)\right\Vert _{Y}<\infty\}.
\label{deformada}%
\end{equation}

For example, if $Y=L^{p},$ then
\begin{equation}
W_{p,L^{p}}^{\alpha}=W^{\alpha,p}.\label{detodas}%
\end{equation}
Let $X:=X(Q_{0})$ and $Y:=Y(Q_{0})$ be rearrangement invariant spaces such
that the local Riesz potential operator
\[
I_{\alpha,Q_{0}}(f)(y)=\int_{Q_{0}}\frac{f(x)}{\left\vert x-y\right\vert
^{n-a}}dx,y\in Q_{0},
\]
defines a bounded map, $I_{\alpha,Q_{0}}:Y\rightarrow X.$ Then, the following
continuous embedding holds (cf. Theorem \ref{alvarado2} below)%
\[
W_{p,Y}^{\alpha}\subset GaRo_{X}.
\]
It follows that if the Boyd indices of $X$ lie in the interval $(0,1)$ then
(cf. Theorem \ref{teogrande})%
\[
W_{p,Y}^{\alpha}\subset X.
\]
The proof is achieved by means of showing that there exists an absolute
constant $c=c(n,\left\Vert I_{\alpha,Q_{0}}\right\Vert _{Y\rightarrow X})>0$
such that $cI_{\alpha,Q_{0}}(D_{p,\alpha}(f))\in\Gamma_{f}^{X}$ for all $f\in
W_{p,Y}^{\alpha}.$ For example, suppose that $1<p<\frac{n}{\alpha},$ $\frac
{1}{q}=\frac{1}{p}-\frac{\alpha}{n},$ $1\leq r_{1}\leq r_{2}\leq\infty,$ then,
since it is easy to relate mapping properties of $I_{\alpha,Q_{0}}$ to those
of the usual Riesz potential $I_{\alpha}$, and, as is well known for Lorentz
spaces we have (cf. \cite{oneil}), $I_{\alpha}:L(p,r_{1})\rightarrow
L(q,r_{2}),$ we can conclude that\footnote{In particular, if we let
$r_{1}=p<r_{2}=\infty,$ we recover (\ref{srbenson}).} (cf. Example \ref{tordo}
below)
\[
W_{p,L(p,r_{1})}^{\alpha}\subset GaRo_{L(q,r_{2})}=L(q,r_{2}).
\]
The end point inequalities for local Riesz potentials that were obtained in
\cite{brew} can be also implemented here. For example, when $p=\frac{n}%
{\alpha},$ we have (cf. \cite[Theorem 2]{brew}), $I_{\alpha,Q_{0}}:L(\frac
{n}{\alpha},\frac{n}{\alpha})\rightarrow BW_{n/\alpha},$ where%
\[
BW_{n/\alpha}=\{f:\int_{0}^{1}\left(  \frac{f^{\ast}(t)}{(1+\log\frac{1}{t}%
)}\right)  ^{n/\alpha}\frac{dt}{t}<\infty\}.
\]
As a consequence, we obtain the following fractional version of the well known
Brezis-Wainger inequality \cite{brew} (cf. Example \ref{bonafide} below),%
\[
W_{\frac{n}{\alpha}}^{\alpha}=W_{\frac{n}{\alpha},L(\frac{n}{\alpha},\frac
{n}{\alpha})}^{\alpha}\subset GaRo_{BW_{n/\alpha}}.
\]

\textbf{Acknowledgement:} I am grateful to Sergey Astashkin who pointed out
that the original proof of Theorem \ref{ladetipodebil} was not complete. I am
also grateful to Daniel Spector for his proof of Remark \ref{daniel}. I am
also grateful to the anonymous referee who insisted that I should prove
stronger results. Needless to say that I remain responsible for the remaining shortcomings.

\section{The Bourgain-Brezis-Mironescu space and the scaling of Garsia
Rodemich conditions}

Let us observe that neither the norms nor the $GaRo_{p}$ spaces change if we
replace in the definition (\ref{labuena}) the test space $P$ by
\[
\tilde{P}=\{\{Q_{i}\}_{i\in I}:\{Q_{i}\}_{i\in I}\text{ subcubes of }%
Q_{0}\text{ with pairwise disjoint interiors with }\#I<\infty\},
\]
where%
\[
\#I=\text{cardinality of }I.
\]
We now turn to the connection with the Bourgain-Brezis-Mironescu construction.
Given $\varepsilon\in(0,1),$ we let%
\[
\tilde{P}_{\varepsilon}=\{\{Q_{i}\}_{i\in I}:\{Q_{i}\}_{i\in I}\in\tilde
{P},\text{ with side of }Q_{i}=\text{ }\varepsilon\text{ \ for all }i\in
I,\text{ and }\#I\leq\varepsilon^{1-n}\}.
\]
The space $B$ of Bourgain-Brezis-Mironescu is defined by%
\[
B=\{f\in L^{1}(Q_{0}):\left\Vert f\right\Vert _{B}<\infty\},
\]
where
\begin{align*}
\left\Vert f\right\Vert _{B}  &  =\sup_{0<\varepsilon<1}\varepsilon^{n-1}%
\sup_{\{Q_{i}\}\in\tilde{P}_{\varepsilon}}%
%TCIMACRO{\dsum \limits_{i\in I}}%
%BeginExpansion
{\displaystyle\sum\limits_{i\in I}}
%EndExpansion
\frac{1}{\left\vert Q_{i}\right\vert }\frac{1}{\left\vert Q_{i}\right\vert
}\int_{Q_{i}}\int_{Q_{i}}\left\vert f(x)-f(y)\right\vert dxdy\\
&  =\sup_{0<\varepsilon<1}\varepsilon^{-1}\sup_{\{Q_{i}\}\in\tilde
{P}_{\varepsilon}}%
%TCIMACRO{\dsum \limits_{i\in I}}%
%BeginExpansion
{\displaystyle\sum\limits_{i\in I}}
%EndExpansion
\frac{1}{\left\vert Q_{i}\right\vert }\int_{Q_{i}}\int_{Q_{i}}\left\vert
f(x)-f(y)\right\vert dxdy.
\end{align*}

More generally, one can consider the scale of spaces $B_{p},1<p\leq\infty,$
defined by
\[
B_{p}=\{f:\left\Vert f\right\Vert _{B_{p}}=\sup_{0<\varepsilon<1}%
\varepsilon^{-1/p^{\prime}}\sup_{\{Q_{i}\}\in\tilde{P}_{\varepsilon}}%
%TCIMACRO{\dsum \limits_{i\in I}}%
%BeginExpansion
{\displaystyle\sum\limits_{i\in I}}
%EndExpansion
\frac{1}{\left\vert Q_{i}\right\vert }\int_{Q_{i}}\int_{Q_{i}}\left\vert
f(x)-f(y)\right\vert dxdy<\infty\}.
\]
Then, of course the space $B$ corresponds to the choice $p=\infty,$%
\[
B=B_{\infty}.
\]
Now it is easy to see the relationship between $B_{p}$ and $GaRo_{p}.$ First
note that $\tilde{P}_{\varepsilon}\subset\tilde{P},$ moreover, if
$\{Q_{i}\}_{i\in I}\in\tilde{P}_{\varepsilon}$ we have
\[
\left(
%TCIMACRO{\dsum \limits_{i\in I}}%
%BeginExpansion
{\displaystyle\sum\limits_{i\in I}}
%EndExpansion
\left\vert Q_{i}\right\vert \right)  ^{1/p^{\prime}}=\left(  \varepsilon
^{n}(\#I)\right)  ^{1/p^{\prime}}\leq\varepsilon^{1/p^{\prime}},\text{
}0<\varepsilon<1.
\]
Consequently, if $f\in GaRo_{p}$ then, for all $0<\varepsilon<1,$ and for all
$\{Q_{i}\}\in\tilde{P}_{\varepsilon},$%
\[%
%TCIMACRO{\dsum \limits_{i\in I}}%
%BeginExpansion
{\displaystyle\sum\limits_{i\in I}}
%EndExpansion
\frac{1}{\left\vert Q_{i}\right\vert }\int_{Q_{i}}\int_{Q_{i}}\left\vert
f(x)-f(y)\right\vert dxdy\leq\varepsilon^{1/p^{\prime}}\left\Vert f\right\Vert
_{GaRo_{p}}.
\]
In other words,%
\begin{equation}
\left\Vert f\right\Vert _{B_{p}}\leq\left\Vert f\right\Vert _{GaRo_{p}%
},1<p\leq\infty. \label{burka}%
\end{equation}
In particular, for $p=\infty,$ it follows from (\ref{burka}) and (\ref{belas})
that (cf. \cite{bbm})%
\begin{equation}
\left\Vert f\right\Vert _{B}\leq\left\Vert f\right\Vert _{GaRo_{\infty}%
}\approx\left\Vert f\right\Vert _{BMO}. \label{ladebmo}%
\end{equation}

With these preliminaries in place we shall now give an easy proof of a more
refined embedding result that was obtained in \cite{bbm} with a different proof.

\begin{theorem}
\label{ladetipodebil}%
\begin{equation}
B\subset L(n^{\prime},\infty)\text{.} \label{ladeboca}%
\end{equation}

\end{theorem}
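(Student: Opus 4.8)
The plan is to deduce the inclusion from a weak-type rearrangement estimate, exactly parallel to the way $(\ref{bela})$ follows from the rearrangement inequality of \cite{milbmo}. Concretely, it suffices to produce a constant $c=c(n)$ so that
\[
f^{\ast\ast}(t)-f^{\ast}(t)\leq c\,t^{-1/n^{\prime}}\left\Vert f\right\Vert _{B},\qquad t\in(0,\tfrac{1}{4}),
\]
because multiplying by $t^{1/n^{\prime}}$, taking the supremum, and integrating $-\frac{d}{dt}f^{\ast\ast}(t)=t^{-1}(f^{\ast\ast}(t)-f^{\ast}(t))$ as in the passage to $(\ref{bela})$ yields $\left\Vert f-f_{Q_{0}}\right\Vert _{L(n^{\prime},\infty)}\leq c\left\Vert f\right\Vert _{B}$ (recall $1/n^{\prime}=1-1/n$ and $GaRo_{n^{\prime}}=L(n^{\prime},\infty)$ by $(\ref{delaintro2})$). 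First I would invoke the selection step underlying $(\ref{bela})$: for each such $t$ there is a finite family $\{Q_{i}\}_{i\in I}$ of pairwise disjoint subcubes of $Q_{0}$ with $\sum_{i}\left\vert Q_{i}\right\vert \leq c(n)t$ and
\[
t\left(  f^{\ast\ast}(t)-f^{\ast}(t)\right)  \leq c(n)\sum_{i\in I}\frac{1}{\left\vert Q_{i}\right\vert }\int_{Q_{i}}\int_{Q_{i}}\left\vert f(x)-f(y)\right\vert \,dx\,dy .
\]

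The heart of the matter is to estimate this oscillation sum by $\left\Vert f\right\Vert _{B}$ rather than by a generic Garsia--Rodemich norm, and the mechanism is a single-scale bound. If $\{Q_{i}\}_{i\in I}$ consists of cubes all of side $\varepsilon$, with total volume $V=(\#I)\varepsilon^{n}$, then I would split $I$ into at most $\lceil (\#I)\varepsilon^{n-1}\rceil$ blocks, each of cardinality $\leq\varepsilon^{1-n}$, so that every block lies in $\tilde{P}_{\varepsilon}$; summing the defining estimate $\sum_{i}\frac{1}{\left\vert Q_{i}\right\vert }\int_{Q_{i}}\int_{Q_{i}}\left\vert f(x)-f(y)\right\vert \leq\varepsilon\left\Vert f\right\Vert _{B}$ over the blocks gives, after using $\varepsilon,V\leq1$,
\[
\sum_{i\in I}\frac{1}{\left\vert Q_{i}\right\vert }\int_{Q_{i}}\int_{Q_{i}}\left\vert f(x)-f(y)\right\vert \,dx\,dy\leq c(n)\,V^{1/n}\left\Vert f\right\Vert _{B}.
\]
Applied at scale $\varepsilon\sim t^{1/n}$ (where $V\sim t$, so one cube already lies in $\tilde{P}_{\varepsilon}$ and no splitting is needed) this bounds the oscillation sum by $c(n)t^{1/n}\left\Vert f\right\Vert _{B}$, and combining with the selection inequality yields $f^{\ast\ast}(t)-f^{\ast}(t)\leq c(n)t^{1/n-1}\left\Vert f\right\Vert _{B}=c(n)t^{-1/n^{\prime}}\left\Vert f\right\Vert _{B}$, as required. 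I want to emphasize that it is precisely the cardinality restriction $\#I\leq\varepsilon^{1-n}$ built into $\tilde{P}_{\varepsilon}$ that converts the $\varepsilon^{-1}$ normalization of $B$ into the exponent $n^{\prime}$; this is where $B$, and not merely a $GaRo$ condition, is genuinely used.

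The hard part will be the compatibility between the family furnished by the rearrangement inequality and the rigid single-scale, bounded-cardinality test families $\tilde{P}_{\varepsilon}$. The favorable case is when the selection can be arranged with one cube (or boundedly many) of side $\sim t^{1/n}$, so that $\#I\leq\varepsilon^{1-n}$ holds automatically. In general the selected cubes sit at several dyadic scales, and a careless scale-by-scale application would replace $V^{1/n}$ by $\sum_{k}V_{k}^{1/n}$, which for an arbitrary spreading of the volumes $V_{k}$ across scales need not be controlled by $t^{1/n}$. Closing this gap---either by organizing the selected cubes as a Whitney-type family, whose per-scale volumes decay geometrically so that $\sum_{k}V_{k}^{1/n}\leq c(n)t^{1/n}$, or by arranging the rearrangement selection to be essentially single-scale from the start---is the delicate point, and is exactly the place where an incomplete argument could slip through.
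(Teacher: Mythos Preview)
Your route is the paper's route: both reduce to showing that the oscillation sum over a dyadic family $\{Q_i\}$ of total volume $V$ is bounded by $c(n)V^{1/n}\|f\|_B$. The paper packages this as the norm inequality $\|f\|_{GaRo_{n'}}\leq C\|f\|_B$ and then invokes $(\ref{bela})$, whereas you unpack $(\ref{bela})$ first and insert the bound at the selected family; the computation is identical, and your per-scale blocking into groups of cardinality $\leq\varepsilon^{1-n}$ is exactly what the paper does at each dyadic level.

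The multi-scale gap you flag is real, but both of your proposed fixes overshoot. The resolution (and this is what the paper does) is simpler and is already implicit in your own single-scale calculation: do \emph{not} collapse the per-scale estimate to $V_k^{1/n}$. Your blocking at scale $\varepsilon_k$ in fact gives
\[
\sum_{Q_i\in\mathcal{F}_k}\frac{1}{|Q_i|}\int_{Q_i}\int_{Q_i}|f(x)-f(y)|\,dx\,dy\leq\bigl\lceil(\#\mathcal{F}_k)\,\varepsilon_k^{\,n-1}\bigr\rceil\,\varepsilon_k\,\|f\|_B\leq(V_k+\varepsilon_k)\|f\|_B,
\]
and only the passage to $2V_k^{1/n}$ loses information. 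Keeping the two terms separate and summing over dyadic scales $\varepsilon_k=2^{-k}$: on one hand $\sum_kV_k=V\leq V^{1/n}$ since $V\leq1$; on the other $\sum_{k:\,\mathcal{F}_k\neq\varnothing}\varepsilon_k$ is a geometric tail dominated by $2\varepsilon_{\max}\leq 2V^{1/n}$, because the largest cube present has volume $\varepsilon_{\max}^n\leq V$. No geometric decay of the $V_k$ is required, and there is no need to force the selection to a single scale; the only structural fact used is that the selected cubes (coming from a Whitney/Calder\'on--Zygmund covering) sit at dyadic scales. With this bookkeeping your argument closes and matches the paper's proof.
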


\begin{proof}
We will actually show that if $f\in B,$%
\begin{equation}
\left\Vert f\right\Vert _{GaRo_{n^{\prime}}}\preceq\left\Vert f\right\Vert
_{B}. \label{primero}%
\end{equation}
The desired result will then follow from (\ref{bela}) above. We shall show
below that when testing the $GaRo_{n^{\prime}}$ norm it will be enough to
consider dyadic cubes. Let $\mathcal{Q=}\{Q_{i}\}_{i\in I}$ be an arbitrary
element of $P$ formed with dyadic cubes$.$ Following \cite{bbm} we split
$\mathcal{Q}$ as follows. For each $j\in N,$ we consider $\mathcal{F}%
_{j}=\{Q_{i}\in\mathcal{Q}:$ $\left\vert Q_{i}\right\vert =2^{-jn}\},$ then
$\mathcal{Q}=%
%TCIMACRO{\dbigcup \limits_{j=1}^{\infty}}%
%BeginExpansion
{\displaystyle\bigcup\limits_{j=1}^{\infty}}
%EndExpansion
\mathcal{F}_{j}.$ For each $j$ we consider subsets $Q_{\ast}\subset
\mathcal{F}_{j}$ such that%
\[
\#Q_{\ast}\leq\left(  2^{-j}\right)  ^{-(n-1)}=2^{j(n-1)}.
\]
For any such subfamily of cubes $Q_{\ast}$ we have%
\[%
%TCIMACRO{\dsum \limits_{Q_{i}\in Q_{\ast}}}%
%BeginExpansion
{\displaystyle\sum\limits_{Q_{i}\in Q_{\ast}}}
%EndExpansion
\frac{1}{\left\vert Q_{i}\right\vert }\int_{Q_{i}}\int_{Q_{i}}\left\vert
f(x)-f(y)\right\vert dxdy\leq2^{-j}\left\Vert f\right\Vert _{B}.
\]
Therefore, covering $\mathcal{F}_{j}$ with disjoint families of subcubes
$Q_{\ast}$ as above, we find, following the proof in \cite{bbm}, that%
\[%
%TCIMACRO{\dsum \limits_{Q_{i}\in\mathcal{F}_{j}}}%
%BeginExpansion
{\displaystyle\sum\limits_{Q_{i}\in\mathcal{F}_{j}}}
%EndExpansion
\frac{1}{\left\vert Q_{i}\right\vert }\int_{Q_{i}}\int_{Q_{i}}\left\vert
f(x)-f(y)\right\vert dxdy\leq\left(  2^{-j}+2^{-jn}(\#\mathcal{F}_{j})\right)
\left\Vert f\right\Vert _{B}.
\]
Consequently%
\begin{align}%
%TCIMACRO{\dsum \limits_{Q\in\mathcal{Q}}}%
%BeginExpansion
{\displaystyle\sum\limits_{Q\in\mathcal{Q}}}
%EndExpansion
\frac{1}{\left\vert Q\right\vert }\int_{Q}\int_{Q}\left\vert
f(x)-f(y)\right\vert dxdy  &  \leq\left(
%TCIMACRO{\dsum \limits_{j,\mathcal{F}_{j}\neq\varnothing}}%
%BeginExpansion
{\displaystyle\sum\limits_{j,\mathcal{F}_{j}\neq\varnothing}}
%EndExpansion
2^{-j}+%
%TCIMACRO{\dsum \limits_{j}}%
%BeginExpansion
{\displaystyle\sum\limits_{j}}
%EndExpansion
2^{-jn}(\#\mathcal{F}_{j})\right)  \left\Vert f\right\Vert _{B}\nonumber\\
&  =A+B. \label{volvere}%
\end{align}
To estimate $A$ note that if $\mathcal{F}_{j_{0}}\neq\varnothing$ there is
$Q^{0}\in\mathcal{F}_{j_{0}}$ such that
\[
2^{-j_{0}}=\left\vert Q^{0}\right\vert ^{1/n}\leq\left(
%TCIMACRO{\dsum \limits_{Q\in\mathcal{Q}}}%
%BeginExpansion
{\displaystyle\sum\limits_{Q\in\mathcal{Q}}}
%EndExpansion
\left\vert Q\right\vert \right)  ^{1/n}.
\]
Therefore, if we let $j_{0}$ the first index such that $\mathcal{F}_{j_{0}%
}\neq\varnothing$ we have
\begin{align*}%
%TCIMACRO{\dsum \limits_{j,\mathcal{F}_{j}\neq\varnothing}}%
%BeginExpansion
{\displaystyle\sum\limits_{j,\mathcal{F}_{j}\neq\varnothing}}
%EndExpansion
2^{-j}  &  =%
%TCIMACRO{\dsum \limits_{j\geq j_{0},\mathcal{F}_{j}\neq\varnothing}}%
%BeginExpansion
{\displaystyle\sum\limits_{j\geq j_{0},\mathcal{F}_{j}\neq\varnothing}}
%EndExpansion
2^{-j}\\
&  =2^{-j_{0}}%
%TCIMACRO{\dsum \limits_{j\geq j_{0},\mathcal{F}_{j}\neq\varnothing}}%
%BeginExpansion
{\displaystyle\sum\limits_{j\geq j_{0},\mathcal{F}_{j}\neq\varnothing}}
%EndExpansion
2^{-(j-j_{0})}\\
&  \leq\left(
%TCIMACRO{\dsum \limits_{Q\in\mathcal{Q}}}%
%BeginExpansion
{\displaystyle\sum\limits_{Q\in\mathcal{Q}}}
%EndExpansion
\left\vert Q\right\vert \right)  ^{1/n}.
\end{align*}
Term $B$ is estimated by noting that%
\[
2^{-jn}(\#\mathcal{F}_{j})=%
%TCIMACRO{\dsum \limits_{Q\in\mathcal{F}_{j}}}%
%BeginExpansion
{\displaystyle\sum\limits_{Q\in\mathcal{F}_{j}}}
%EndExpansion
\left\vert Q\right\vert
\]
Thus%
\begin{align*}
B  &  =%
%TCIMACRO{\dsum \limits_{j}}%
%BeginExpansion
{\displaystyle\sum\limits_{j}}
%EndExpansion%
%TCIMACRO{\dsum \limits_{Q\in\mathcal{F}_{j}}}%
%BeginExpansion
{\displaystyle\sum\limits_{Q\in\mathcal{F}_{j}}}
%EndExpansion
\left\vert Q\right\vert =%
%TCIMACRO{\dsum \limits_{Q\in\mathcal{Q}}}%
%BeginExpansion
{\displaystyle\sum\limits_{Q\in\mathcal{Q}}}
%EndExpansion
\left\vert Q\right\vert =\left(
%TCIMACRO{\dsum \limits_{Q\in\mathcal{Q}}}%
%BeginExpansion
{\displaystyle\sum\limits_{Q\in\mathcal{Q}}}
%EndExpansion
\left\vert Q\right\vert \right)  ^{1/n}\left(
%TCIMACRO{\dsum \limits_{Q\in\mathcal{Q}}}%
%BeginExpansion
{\displaystyle\sum\limits_{Q\in\mathcal{Q}}}
%EndExpansion
\left\vert Q\right\vert \right)  ^{1/n^{\prime}}\\
&  \leq\left(
%TCIMACRO{\dsum \limits_{Q\in\mathcal{Q}}}%
%BeginExpansion
{\displaystyle\sum\limits_{Q\in\mathcal{Q}}}
%EndExpansion
\left\vert Q\right\vert \right)  ^{1/n}.
\end{align*}
Inserting the estimates of $A$ and $B$ in (\ref{volvere}) we find%
\[%
%TCIMACRO{\dsum \limits_{Q\in\mathcal{Q}}}%
%BeginExpansion
{\displaystyle\sum\limits_{Q\in\mathcal{Q}}}
%EndExpansion
\frac{1}{\left\vert Q\right\vert }\int_{Q}\int_{Q}\left\vert
f(x)-f(y)\right\vert dxdy\leq C\left(
%TCIMACRO{\dsum \limits_{Q\in\mathcal{Q}}}%
%BeginExpansion
{\displaystyle\sum\limits_{Q\in\mathcal{Q}}}
%EndExpansion
\left\vert Q\right\vert \right)  ^{1/n}\left\Vert f\right\Vert _{B}.
\]
It follows that%
\[
\left\Vert f\right\Vert _{GaRo_{n^{\prime}}}\leq C\left\Vert f\right\Vert
_{B}.
\]
To conclude the proof we argue that only dyadic cubes need to be tested.
Indeed, we may assume without loss that $\int_{Q_{0}}f=0.$ Then $\left\Vert
f\right\Vert _{GaRo_{n^{\prime}}}\sim\left\Vert f\right\Vert _{L(n^{\prime
},\infty)}.$ As is well known $L(n^{\prime},\infty)$ can be obtained by the
real method of interpolation (cf. \cite{BS})%
\[
L(n^{\prime},\infty)=(L^{1},L^{\infty})_{1/n,\infty},
\]
with%
\[
\left\Vert f\right\Vert _{L(n^{\prime},\infty)}\sim\sup_{t>0}t^{-1/n}%
[\inf\{\left\Vert h\right\Vert _{L^{1}}+t\left\Vert g\right\Vert _{L^{\infty}%
}:f=h+g\}].
\]
The computation of the indicated infimum (called the $K-$functional of $f$)
can be achieved by elementary cutoffs but can be also achieved using
Calder\'{o}n-Zygmund decompositions. Indeed, a Calder\'{o}n-Zygmund
decomposition $f=h_{CZ}(t)+g_{CZ}(t),$ nearly achieves the infimum and indeed
(cf. \cite{bbm})
\[
\left\Vert f\right\Vert _{L(n^{\prime},\infty)}\sim\sup\{t^{-1/n}\left\Vert
h_{CZ}(t)\right\Vert _{L^{1}}\}.
\]
But as it turns out the computation of $t^{-1/n}\left\Vert h_{CZ}%
(t)\right\Vert _{L^{1}}$ corresponds to the computation of $%
%TCIMACRO{\dsum \limits_{Q_{i}\in\mathcal{Q}}}%
%BeginExpansion
{\displaystyle\sum\limits_{Q_{i}\in\mathcal{Q}}}
%EndExpansion
\frac{1}{\left\vert Q_{i}\right\vert }\int_{Q_{i}}\int_{Q_{i}}\left\vert
f(x)-f(y)\right\vert dxdy$ (cf. \cite{bbm})! Our assertion then follows since
the Calder\'{o}n-Zygmund decomposition used is dyadic.
\end{proof}

\begin{remark}
As we hope it is clear from the proof, the key of the argument is the
introduction of the Garsia-Rodemich conditions.
\end{remark}

\section{A characterization of rearrangement invariant spaces via
Garsia-Rodemich spaces\label{secc::general}}

We start by recalling a few basic notions on rearrangements and rearrangement
invariant spaces. We refer to \cite{BS} and \cite{boyd} for further details
and background.

Let $f:Q_{0}\rightarrow\mathbb{R},$ be a measurable function. The distribution
function of $f$ is given by\footnote{where $\left\vert {}\right\vert $ denotes
the Lebesgue measure on $Q_{0}$ (we also use this notation for the Lebesgue
measure on the unit interval $[0,1]$ since it will cause no confusion).}
\[
\lambda_{f}(t)=\left\vert \{x\in Q_{0}:\left\vert u(x)\right\vert
>t\}\right\vert \text{ \ \ \ \ }(t>0).
\]
The decreasing rearrangement of $f$ is the right-continuous non-increasing
function from $[0,1)$ into $\mathbb{R}^{+}$ which is equimeasurable with $f.$
It can be defined by the formula
\[
f^{\ast}(s)=\inf\{t\geq0:\lambda_{f}(t)\leq s\},\text{ \ }s\in\lbrack0,1),
\]
and satisfies
\[
\lambda_{f}(t)=\left\vert \{x\in Q_{0}:\left\vert f(x)\right\vert
>t\right\vert \}=\left\vert \left\{  s\in\lbrack0,1):f^{\ast}(s)>t\right\}
\right\vert ,\ t\geq0.
\]

The maximal average $f^{\ast\ast}(t)$ is defined by%
\begin{equation}
f^{\ast\ast}(t)=\frac{1}{t}\int_{0}^{t}f^{\ast}(s)ds=\frac{1}{t}\sup\left\{
\int_{E}\left\vert f(x)\right\vert dx:\left\vert E\right\vert =t\right\}
,\text{ }t>0. \label{davoi}%
\end{equation}

We say that a Banach function space $X:=X({Q}_{0}),$ is a
rearrangement-invariant (r.i.) space if $g\in X$ implies that all measurable
functions $f$ with the same rearrangement with $f^{\ast}=g^{\ast},$ also
belong to $X,$ and, moreover, $\Vert f\Vert_{X}=\Vert g\Vert_{X}$.
Rearrangement invariant spaces on ${Q}_{0}$ can be represented by a r.i. space
on the interval $(0,1),$ with Lebesgue measure, $\hat{X}=\hat{X}(0,1),$ such
that\footnote{We refer to \cite[Theorem 4.10 and subsequent remarks]{BS} for
further background information on r.i. spaces.}
\[
\Vert f\Vert_{X}=\Vert f^{\ast}\Vert_{\hat{X}},
\]
for every $f\in X.$ Since it will be clear from the context which space is
involved in the discussion we will \textquotedblleft drop the hat" and denote
the norm of both spaces with same symbol \textquotedblleft$\Vert\circ\Vert
_{X}".$ Typical examples of r.i. spaces are the $L^{p}$-spaces, $L(p,q)$
spaces, Lorentz spaces, Marcinkiewicz spaces and Orlicz spaces.

The following restrictions on the spaces will play a role in our development
in this paper (cf. \cite{boyd}, \cite{BS}):%
\[
(A)\text{ \ \ There exists a universal constant }\beta(X)\text{ such
that\ \ }\Vert f^{\ast\ast}\Vert_{X}\leq\beta_{0}(X)\Vert f\Vert_{X},
\]%
\[
(B)\text{ \ \ \ \ \ \ There exists a universal constant }\beta(X)\text{ such
that\ \ }\Vert\int_{t}^{1}f(s)\frac{ds}{s}\Vert_{X}\leq\beta_{1}(X)\Vert
f\Vert_{X}.
\]
In the language of \textquotedblleft indices" a space that satisfies both
$(A)$ and $(B)$ can be described by saying that \textquotedblleft the Boyd
indices (cf. \cite{boyd}, \cite{BS}) of $X$ are in the interval $(0,1)".$

The basic estimate concerning the $GaRo_{X}$ spaces (cf. Definition
\ref{def::duca} above) is given by

\begin{theorem}
\label{teorrea}Let $X:=X(Q_{0})$ be a rearrangement invariant space. Then
there exists a universal constant $c_{n}>0,$ such that if $f\in GaRo_{X}$ and
$\gamma\in\Gamma_{f}^{X},$ then%
\begin{equation}
f^{\ast\ast}(t)-f^{\ast}(t)\leq c_{n}\gamma^{\ast\ast}(t),0<t<1/4.
\label{vale1}%
\end{equation}

\end{theorem}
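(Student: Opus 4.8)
The plan is to fix $t\in(0,1/4)$ and, for that single value, to produce a finite family of pairwise disjoint subcubes $\{Q_i\}\subset Q_0$ that is thrifty, $\sum_i|Q_i|\le4t$, yet captures the oscillation of $f$ at scale $t$, meaning that its double oscillation integral dominates $t\bigl(f^{\ast\ast}(t)-f^{\ast}(t)\bigr)$ up to a dimensional constant. Granting this, the defining inequality (\ref{dulfa}) for $\gamma\in\Gamma_f^X$ turns the oscillation into a $\gamma$-integral and the rest is rearrangement bookkeeping:
\begin{align*}
t\bigl(f^{\ast\ast}(t)-f^{\ast}(t)\bigr)
&\le c_n\sum_i\frac{1}{|Q_i|}\int_{Q_i}\int_{Q_i}|f(x)-f(y)|\,dx\,dy
\le c_n\sum_i\int_{Q_i}\gamma
=c_n\int_{\bigcup_iQ_i}\gamma\\
&\le c_n\int_0^{|\bigcup_iQ_i|}\gamma^{\ast}(s)\,ds
\le c_n\int_0^{4t}\gamma^{\ast}(s)\,ds
=4c_n\,t\,\gamma^{\ast\ast}(4t)
\le 4c_n\,t\,\gamma^{\ast\ast}(t).
\end{align*}
Dividing by $t$ gives (\ref{vale1}) with constant $4c_n$. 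Here I use the disjointness of the $Q_i$, the elementary bound $\int_A\gamma\le\int_0^{|A|}\gamma^{\ast}$ for any set $A$, and the monotonicity of $\gamma^{\ast\ast}$; the step $|\bigcup_iQ_i|=\sum_i|Q_i|\le4t\le1$ is exactly where $t<1/4$ enters, as the family must fit inside $Q_0$.

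Before constructing the cubes I would reduce to the case $f\ge0$. Both $f^{\ast\ast}$ and $f^{\ast}$ depend only on $|f|$, and the reverse triangle inequality $\bigl||f(x)|-|f(y)|\bigr|\le|f(x)-f(y)|$ shows that every $\gamma\in\Gamma_f^X$ also lies in $\Gamma_{|f|}^X$; hence it suffices to prove (\ref{vale1}) for $|f|$, and I may assume $f\ge0$. I would then rewrite the left-hand side by the exact layer-cake identity
\[
t\bigl(f^{\ast\ast}(t)-f^{\ast}(t)\bigr)=\int_0^t\bigl(f^{\ast}(s)-f^{\ast}(t)\bigr)\,ds=\int_{\{f>f^{\ast}(t)\}}\bigl(f-f^{\ast}(t)\bigr)\,dx,\qquad |\{f>f^{\ast}(t)\}|\le t,
\]
so that the goal becomes capturing the excess of $f$ above its level $f^{\ast}(t)$ by oscillation over a thrifty family of cubes.

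The main obstacle, and the technical heart that extends the rearrangement inequalities of \cite{milbmo}, is the construction of $\{Q_i\}$. A family that merely covers the level set $\{f>f^{\ast}(t)\}$ will not do: on a subcube buried inside that set $f$ may be nearly constant, so its oscillation integral is negligible although the excess it carries is not. Each $Q_i$ must instead straddle the boundary of $\{f>f^{\ast}(t)\}$, containing, besides a piece of the level set, a comparable amount of the sublevel region $\{f\le f^{\ast}(t)\}$; for such a balanced cube one has
\[
\frac{1}{|Q_i|}\int_{Q_i}\int_{Q_i}|f(x)-f(y)|\,dx\,dy\ \gtrsim_n\ \int_{Q_i\cap\{f>f^{\ast}(t)\}}\bigl(f-f^{\ast}(t)\bigr)\,dx,
\]
because a cube that is a fixed fraction high-value and a fixed fraction low-value has double oscillation comparable to the excess mass it contains. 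I would build the family by a Vitali--Besicovitch-type selection: associate to each point of $\{f>f^{\ast}(t)\}$ a cube that splits its mass evenly between the super- and sublevel sets, extract a disjoint subfamily exhausting the excess $\int_{\{f>f^{\ast}(t)\}}(f-f^{\ast}(t))\,dx$ up to a dimensional constant, and verify that the bounded doubling forced by the straddling keeps $\sum_i|Q_i|\le4t$. Summing the displayed local bounds over $i$ produces the first inequality of the chain above and closes the argument. (This local-to-global passage is the r.i.\ counterpart of the Calder\'{o}n--Zygmund computation invoked in the proof of Theorem \ref{ladetipodebil}.)
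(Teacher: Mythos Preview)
Your overall strategy is the paper's: reduce to $f\ge0$, write $t(f^{\ast\ast}(t)-f^{\ast}(t))=\int_{E}(f-f^{\ast}(t))$ with $E=\{f>f^{\ast}(t)\}$, cover $E$ by a disjoint family of subcubes of $Q_0$ each meeting $E^c$ in a fixed proportion and having $\sum_i|Q_i|\lesssim t$, bound the excess by the double oscillation over the family, apply (\ref{dulfa}), and finish with the rearrangement chain you display (which is line for line the paper's).

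The difference is the covering lemma. You invoke a Vitali--Besicovitch selection of balanced cubes, one per point of $E$; the paper instead encloses $E$ in an open $\Omega$ with $|\Omega|\le2t$ and applies a Whitney-type decomposition \cite[Lemma~7.2, p.~377]{BS}, obtaining in one stroke pairwise disjoint $\{Q_i\}\subset Q_0$ with $\Omega\subset\bigcup_i Q_i$, $\sum_i|Q_i|\le2^{n+1}|\Omega|\le2^{n+2}t$, and $|Q_i\cap\Omega^c|\ge\tfrac12|Q_i|$ (hence $|Q_i\cap E^c|\ge\tfrac12|Q_i|$). The Whitney route delivers full coverage of $E$ by a single disjoint family already sitting inside $Q_0$, sidestepping two loose ends in your sketch: a Besicovitch subfamily captures a priori only a $1/N_n$ fraction of the excess integral (you need a pigeonhole you do not mention), and balanced cubes centered at points of $E$ near $\partial Q_0$ may protrude. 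Neither issue is fatal, but the Whitney lemma is the cleaner tool here. On the other hand, your single local inequality $\tfrac{1}{|Q|}\int_Q\int_Q|f(x)-f(y)|\,dx\,dy\ge\tfrac{2|Q\cap E^c|}{|Q|}\int_{Q\cap E}(f-f^{\ast}(t))$ is a bit slicker than the paper's intermediate split into $(I)+(II)$ via the cube averages $f_{Q_i}$, which it inherits from \cite{milbmo}.
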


\begin{proof}
We follow the proof of \cite[Theorem 5 pages 496-497]{milbmo} very closely and
only indicate in detail the necessary changes at the appropriate steps. Let
$f\in GaRo_{X}$ and $\gamma\in\Gamma_{f}^{X}.$ Since $\left\vert \left\vert
f(x)\right\vert -\left\vert f(y)\right\vert \right\vert \leq\left\vert
f(x)-f(y)\right\vert $, it follows that $\Gamma_{f}^{X}\subset\Gamma
_{\left\vert f\right\vert }^{X}$ and $\left\vert f\right\vert \in GaRo_{X}$.
Moreover, by definition $f^{\ast\ast}(t)=\left\vert f\right\vert ^{\ast\ast
}(t)$ and $f^{\ast}(t)=\left\vert f\right\vert ^{\ast}(t).$ In other words, to
compute the left hand side of (\ref{vale1}) we may assume without loss that
$f$ is positive\footnote{In other words we compute the left hand side using
$\left\vert f\right\vert $ while keeping $\gamma\in\Gamma_{f}^{X}.$}. Fix
$t>0,$ such that $t<\left\vert Q_{0}\right\vert /4=1/4,$ and let $E=\{x\in
Q_{0}:f(x)>f^{\ast}(t)\}.$ By definition, $\left\vert E\right\vert \leq
t<1/4,$ consequently, we can find a relative open subset of $Q_{0},$ $\Omega,$
say$,$ such that $E\subset\Omega$ and $\left\vert \Omega\right\vert \leq
2t\leq1/2.$ By \cite[Lemma 7.2, page 377]{BS} we can find a sequence of cubes
$\{Q_{i}\}_{i\in N},$ with pairwise disjoint interiors, such that:%
\begin{align*}
(i)\text{ \ \ }\left\vert \Omega\cap Q_{i}\right\vert  &  \leq\frac{1}%
{2}\left\vert Q_{i}\right\vert \leq\left\vert \Omega^{c}\cap Q_{i}\right\vert
,\text{ }i=1,2...\\
(ii)\text{ \ \ \ }\Omega &  \subset%
%TCIMACRO{\dbigcup \limits_{i\in N}}%
%BeginExpansion
{\displaystyle\bigcup\limits_{i\in N}}
%EndExpansion
Q_{i}\subset Q_{0}\\
(iii)\text{ \ \ }\left\vert \Omega\right\vert  &  \leq%
%TCIMACRO{\dsum \limits_{i\in N}}%
%BeginExpansion
{\displaystyle\sum\limits_{i\in N}}
%EndExpansion
\left\vert Q_{i}\right\vert \leq2^{n+1}\left\vert \Omega\right\vert .
\end{align*}
At this point following all the corresponding steps in \cite[Theorem 5 pages
496-497]{milbmo} we arrive at
\begin{align*}
t\left(  f^{\ast\ast}(t)-f^{\ast}(t)\right)   &  \leq%
%TCIMACRO{\dsum \limits_{i\in N}}%
%BeginExpansion
{\displaystyle\sum\limits_{i\in N}}
%EndExpansion
\left(  \int_{Q_{i}}\{f(x)-f_{Q_{i}}\}dx+\left\vert E\cap Q_{i}\right\vert
\{f_{Q_{i}}-f^{\ast}(t)\}\right) \\
&  =(I)+(II).
\end{align*}
To estimate $(II),$ we let $J=\{i:f_{Q_{i}}>f^{\ast}(t)\},$ and follow the
steps of \cite[Theorem 5 pages 496-497]{milbmo} until the point we arrive to%
\[
(II)\leq%
%TCIMACRO{\dsum \limits_{i\in J}}%
%BeginExpansion
{\displaystyle\sum\limits_{i\in J}}
%EndExpansion
\frac{1}{\left\vert Q_{i}\right\vert }\int_{Q_{i}}\int_{Q_{i}}\left\vert
f(y)-f(x)\right\vert dxdy.
\]
Now, we recall that $\gamma\in\Gamma_{f}^{X}\subset\Gamma_{\left\vert
f\right\vert }^{X},$ therefore invoking the definition of $GaRo_{X}$ we have%
\begin{align*}
(II)  &  \leq%
%TCIMACRO{\dsum \limits_{i\in J}}%
%BeginExpansion
{\displaystyle\sum\limits_{i\in J}}
%EndExpansion
\int_{Q_{i}}\gamma(x)dx\\
&  \leq\int_{0}^{\sum_{i\in J}\left\vert Q_{i}\right\vert }\gamma^{\ast
}(s)ds\\
&  \leq\int_{0}^{2^{n+2}t}\gamma^{\ast}(s)ds\\
&  =2^{n+2}t\gamma^{\ast\ast}(2^{n+2}t)\\
&  \leq2^{n+2}t\gamma^{\ast\ast}(t)\text{ (since }\gamma^{\ast\ast}\text{ is
decreasing).}%
\end{align*}
Likewise, we estimate $(I)$ proceeding as in \cite[Theorem 5 pages
496-497]{milbmo} until we arrive to
\[
(I)\leq%
%TCIMACRO{\dsum \limits_{i\in N}}%
%BeginExpansion
{\displaystyle\sum\limits_{i\in N}}
%EndExpansion
\frac{1}{\left\vert Q_{i}\right\vert }\int_{Q_{i}}\int_{Q_{i}}\left\vert
f(x)-f(y)\right\vert dxdy.
\]
Again by the definition of $\Gamma_{f}^{X}$ we find%
\[
(I)\leq2^{n+2}t\gamma^{\ast\ast}(t).
\]
Combining the inequalities for $(I)$ and $(II)$ we can find a universal
constant $c_{n}$ such that for $\gamma\in\Gamma_{f}^{X},$ we have%
\[
f^{\ast\ast}(t)-f^{\ast}(t)\leq c_{n}\gamma^{\ast\ast}(t),\text{ for all
}t<1/4,
\]
as we wished to show.
\end{proof}

We can now state and prove the main result of this section.

\begin{theorem}
\label{teogrande}Let $X$ be a rearrangement invariant space with Boyd indices
in the interval $(0,1).$ Then, as sets%
\[
GaRo_{X}=X.
\]
Moreover, we have the following estimates%
\begin{equation}
\left\Vert f-f_{Q_{0}}\right\Vert _{GaRo_{X}}\leq2\left\Vert f\right\Vert _{X}
\label{lola1}%
\end{equation}
and%
\begin{equation}
\left\Vert f-f_{Q_{0}}\right\Vert _{X}\leq c(X)\left\Vert f\right\Vert
_{GaRo_{X}}, \label{lola2}%
\end{equation}
where $c(X)$ depends only on $X.$
\end{theorem}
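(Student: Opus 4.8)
The plan is to prove the two estimates separately; together they give $GaRo_X=X$ as sets. Throughout I note that the left-hand side of (\ref{dulfa}) depends on $f$ only through the differences $f(x)-f(y)$, so it is unchanged when $f$ is replaced by $g:=f-f_{Q_0}$. Consequently $\Gamma_{f-f_{Q_0}}^{X}=\Gamma_{f}^{X}$ and $\left\Vert f-f_{Q_0}\right\Vert_{GaRo_X}=\left\Vert f\right\Vert_{GaRo_X}$, a fact I will use on both sides.

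For the upper bound (\ref{lola1}) I would simply exhibit an admissible competitor. Taking $\gamma=2|f|$ and running the elementary estimate of the footnote to (\ref{delaintro}), but now with the crude bound $|f(x)-f(y)|\le|f(x)|+|f(y)|$ instead of centering at $f_{Q_i}$, shows that for every $\{Q_i\}\in P$ one has $\sum_i\frac{1}{|Q_i|}\int_{Q_i}\int_{Q_i}|f(x)-f(y)|\,dx\,dy\le 2\sum_i\int_{Q_i}|f|\,dx$. Hence $2|f|\in\Gamma_{f-f_{Q_0}}^{X}$, and since $X$ is rearrangement invariant, $\left\Vert 2|f|\right\Vert_X=2\left\Vert f\right\Vert_X$; (\ref{lola1}) follows from the definition of the $GaRo_X$ norm.

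For the lower bound (\ref{lola2}) I would work with $g=f-f_{Q_0}$ and combine Theorem \ref{teorrea} with the Boyd index hypotheses. Fix any $\gamma\in\Gamma_{f}^{X}=\Gamma_{g}^{X}$; Theorem \ref{teorrea} applied to $g$ gives $g^{\ast\ast}(t)-g^{\ast}(t)\le c_n\gamma^{\ast\ast}(t)$ for $t\in(0,1/4)$. The \emph{decisive point}, and the place where Theorem \ref{teorrea} alone is insufficient, is that the oscillation $g^{\ast\ast}-g^{\ast}$ does not see the mean of $g$ (e.g. it vanishes for $f=\chi_{(0,1/2)}-\chi_{(1/2,1)}$, while $\left\Vert g\right\Vert_X\neq 0$). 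I recover that missing information by testing (\ref{dulfa}) on the trivial partition $\{Q_0\}$, which by Jensen's inequality yields $\left\Vert g\right\Vert_{L^1}=\int_{Q_0}|f-f_{Q_0}|\,dx\le\int_{Q_0}\int_{Q_0}|f(x)-f(y)|\,dx\,dy\le\int_{Q_0}\gamma\le C_X\left\Vert\gamma\right\Vert_X$, using $X\hookrightarrow L^1$. I then reconstruct $g^{\ast\ast}$ from its derivative via $-(g^{\ast\ast})'(t)=(g^{\ast\ast}(t)-g^{\ast}(t))/t$, i.e. $g^{\ast\ast}(t)=\left\Vert g\right\Vert_{L^1}+\int_t^1\frac{g^{\ast\ast}(s)-g^{\ast}(s)}{s}\,ds$.

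Finally I would estimate $\left\Vert g\right\Vert_X=\left\Vert g^{\ast}\right\Vert_{\hat X}$ using $g^{\ast}\le g^{\ast\ast}$. Splitting the integral at $1/4$, the tail $\int_{1/4}^1\frac{g^{\ast\ast}(s)-g^{\ast}(s)}{s}\,ds$ is a constant controlled by $\left\Vert g\right\Vert_{L^1}$ (since $g^{\ast\ast}(s)\le s^{-1}\left\Vert g\right\Vert_{L^1}$ there), while on $(t,1/4)$ the integrand is bounded by $c_n\gamma^{\ast\ast}(s)/s$, so that part is dominated by $c_n\int_t^{1}\gamma^{\ast\ast}(s)\frac{ds}{s}$, whose $\hat X$-norm is at most $c_n\beta_1(X)\beta_0(X)\left\Vert\gamma\right\Vert_X$ by conditions $(B)$ and $(A)$. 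The constant contributions are absorbed into $\left\Vert\gamma\right\Vert_X$ using $\left\Vert\chi_{(0,1/4)}\right\Vert_{\hat X}<\infty$, and on $(1/4,1)$ I bound $g^{\ast}$ by $g^{\ast\ast}(1/4)$ together with $\left\Vert\chi_{(1/4,1)}\right\Vert_{\hat X}<\infty$. Collecting the pieces gives $\left\Vert g\right\Vert_X\le c(X)\left\Vert\gamma\right\Vert_X$, and taking the infimum over $\gamma\in\Gamma_f^{X}$ yields (\ref{lola2}). The main obstacle is exactly this interplay: the rearrangement inequality of Theorem \ref{teorrea} captures only the oscillatory part of $f$, so one must supplement it with the $L^1$ control from the single-cube test and with the Boyd-index boundedness of the Hardy-type operator $h\mapsto\int_t^1 h(s)\frac{ds}{s}$ in order to convert a pointwise estimate on $(0,1/4)$ into a bound on the full norm $\left\Vert f-f_{Q_0}\right\Vert_X$.
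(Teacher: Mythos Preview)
Your proposal is correct and follows essentially the same approach as the paper's own proof: for (\ref{lola1}) you exhibit $2|f|\in\Gamma_f^X$ exactly as the paper does, and for (\ref{lola2}) you combine Theorem~\ref{teorrea} with the fundamental theorem of calculus representation of $g^{\ast\ast}$, the Boyd-index bounds $(A)$ and $(B)$, and the $L^1$ control of $g=f-f_{Q_0}$ obtained by testing on the single cube $\{Q_0\}$. The only cosmetic differences are that the paper writes $g^{\ast\ast}(t)=\int_t^{\infty}(g^{\ast\ast}-g^{\ast})\,\frac{ds}{s}$ (using $g^{\ast\ast}(t)\to 0$) rather than your equivalent formula $g^{\ast\ast}(t)=\|g\|_{L^1}+\int_t^{1}(g^{\ast\ast}-g^{\ast})\,\frac{ds}{s}$, and it handles $\int_t^{\infty}\gamma^{\ast\ast}(s)\,\frac{ds}{s}$ via an integration by parts into $\gamma^{\ast\ast}(t)+\int_t^{\infty}\gamma^{\ast}(s)\,\frac{ds}{s}$ before invoking $(A)$ and $(B)$, whereas you apply $(B)$ directly to $\gamma^{\ast\ast}$ and then $(A)$; both routes are valid and yield the same conclusion.
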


\begin{proof}
Let us start by remarking that if $f\in X$ then $2\left\vert f\right\vert
\in\Gamma_{f}^{X}.$ Indeed, for any family of cubes $\{Q_{i}\}_{i\in I}\in P$
we have%
\[%
%TCIMACRO{\dsum \limits_{i\in I}}%
%BeginExpansion
{\displaystyle\sum\limits_{i\in I}}
%EndExpansion
\frac{1}{\left\vert Q_{i}\right\vert }\int_{Q_{i}}\int_{Q_{i}}\left\vert
f(x)-f(y)\right\vert dxdy\leq2%
%TCIMACRO{\dsum \limits_{i\in I}}%
%BeginExpansion
{\displaystyle\sum\limits_{i\in I}}
%EndExpansion
\int_{Q_{i}}\left\vert f(x)\right\vert dx.
\]
Therefore, $X\subset GaRo_{X}$ and
\begin{equation}
\left\Vert f\right\Vert _{GaRo_{X}}\leq2\left\Vert f\right\Vert _{X}.
\label{lola}%
\end{equation}
Moreover, since $\left\Vert f\right\Vert _{GaRo_{X}}=\left\Vert f-f_{Q_{0}%
}\right\Vert _{GaRo_{X}},$ we see that (\ref{lola1}) follows from
(\ref{lola}). The remaining inclusion $GaRo_{X}\subset X$ will follow if we
can prove that%
\[
\left\Vert f-f_{Q_{0}}\right\Vert _{X}\leq c(X)\left\Vert f\right\Vert
_{GaRo_{X}}.
\]

Towards this end let $g=f-f_{Q_{0}}.$ Since $g\in L^{1}(Q_{0}),$ we see that
$g^{\ast\ast}(t)\rightarrow0$ as $t\rightarrow\infty.$ Therefore, by the
fundamental theorem of calculus, we can write\footnote{Recall that $\frac
{d}{dt}\left(  g^{\ast\ast}(t)\right)  =\frac{\left(  g^{\ast}(t)-g^{\ast\ast
}(t)\right)  }{t}.$}%
\begin{equation}
g^{\ast\ast}(t)=\int_{t}^{\infty}\left(  g^{\ast\ast}(s)-g^{\ast}(s)\right)
\frac{ds}{s}. \label{vale2}%
\end{equation}
Since $f$ and $g$ differ by a constant we readily see that $\Gamma_{g}%
^{X}=\Gamma_{f}^{X}.$ Consequently, by (\ref{vale1}), for all $\gamma\in$
$\Gamma_{f}^{X}$ we have
\[
\left(  g^{\ast\ast}(t)-g^{\ast}(t)\right)  \leq c_{n}\gamma^{\ast\ast
}(t),t\leq1/4.
\]
To deal with $t>1/4,$ we note that $t\left(  g^{\ast\ast}(t)-g^{\ast
}(t)\right)  =\int_{f^{\ast}(t)}^{\infty}\lambda_{g}(s)ds\leq\int_{0}^{\infty
}\lambda_{g}(s)ds=\left\Vert g\right\Vert _{L^{1}};$ therefore,%
\[
\left(  g^{\ast\ast}(t)-g^{\ast}(t)\right)  \leq t^{-1}\left\Vert g\right\Vert
_{L^{1}},t>1/4.
\]
Inserting the last two estimates in (\ref{vale2}) we find%
\begin{align}
g^{\ast\ast}(t)  &  \leq c_{n}\int_{t}^{1/4}\gamma^{\ast\ast}(s)\frac{ds}%
{s}+c_{n}\left\Vert g\right\Vert _{L^{1}}\int_{1/4}^{\infty}s^{-1}\frac{ds}%
{s}\nonumber\\
&  \leq c_{n}\int_{t}^{\infty}\gamma^{\ast\ast}(s)\frac{ds}{s}+c_{n}%
4\left\Vert g\right\Vert _{L^{1}}. \label{darle}%
\end{align}
Now, writing
\[
\int_{t}^{\infty}\gamma^{\ast\ast}(s)\frac{ds}{s}=\int_{t}^{\infty}\int%
_{0}^{s}\gamma^{\ast}(r)dr\frac{ds}{s^{2}}=\int_{t}^{\infty}\int_{0}^{s}%
\gamma^{\ast}(r)drd(-s^{-1})
\]
we see that%
\[
\int_{t}^{\infty}\gamma^{\ast\ast}(s)\frac{ds}{s}=\gamma^{\ast\ast}%
(t)+\int_{t}^{\infty}\gamma^{\ast}(s)\frac{ds}{s}.
\]
Inserting this information in (\ref{darle}) we find%
\[
g^{\ast\ast}(t)\preceq\gamma^{\ast\ast}(t)+\int_{t}^{\infty}\gamma^{\ast
}(s)\frac{ds}{s}+\left\Vert g\right\Vert _{L^{1}}.
\]
Therefore, applying the $X$ norm on both sides of the last inequality, and
then using the fact that $X$ has Boyd indices lying on $(0,1),$ we
obtain\footnote{Note that $L^{\infty}\subset X,$ which implies that for the
constant function $\left\Vert g\right\Vert _{L^{1}}$ we have
\[
\left\Vert \left\Vert g\right\Vert _{L^{1}}\right\Vert _{X}\preceq\left\Vert
g\right\Vert _{L^{1}}.
\]
}%
\begin{equation}
\left\Vert g\right\Vert _{X}\preceq\left\Vert \gamma\right\Vert _{X}%
+\left\Vert g\right\Vert _{L^{1}}. \label{darle1}%
\end{equation}
Now, since $\int_{Q_{0}}g=0,$ $\left\{  Q_{0}\right\}  \in P,$ and $\left\vert
Q_{0}\right\vert =1,$ we have
\begin{align*}
\left\Vert g\right\Vert _{L^{1}}  &  =\int_{Q_{0}}\left\vert g(x)\right\vert
=\int_{Q_{0}}\left\vert g(x)-\int_{Q_{0}}g\right\vert dx\\
&  \leq\int_{Q_{0}}\int_{Q_{0}}\left\vert g(x)-g(y)\right\vert dxdy\\
&  =\frac{1}{\left\vert Q_{0}\right\vert }\int_{Q_{0}}\int_{Q_{0}}\left\vert
f(x)-f(y)\right\vert dxdy\\
&  \leq\int_{Q_{0}}\left\vert \gamma(y)\right\vert dy\text{ (since }\gamma
\in\Gamma_{f}^{X}\text{)}\\
&  =\left\Vert \gamma\right\Vert _{L^{1}}\\
&  \leq C_{X}\left\Vert \gamma\right\Vert _{X}.
\end{align*}
Updating (\ref{darle1}) we obtain%
\[
\left\Vert g\right\Vert _{X}\preceq\left\Vert \gamma\right\Vert _{X}.
\]
Therefore, taking infimum over all $\gamma\in\Gamma_{f}^{X}$ yields that there
exists an absolute constant $c(X)$, that depends only on $X,$ such that%
\[
\left\Vert f-f_{Q_{0}}\right\Vert _{X}\leq c(X)\left\Vert f\right\Vert
_{GaRo_{X}},
\]
as we wished to show.
\end{proof}

\begin{corollary}
Let $1<p<\infty.$ Then,%
\[
GaRo_{L(p,\infty)}=GaRo_{p}.
\]

\end{corollary}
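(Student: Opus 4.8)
The plan is to combine the main characterization theorem just proved (Theorem~\ref{teogrande}) with the classical identification of the Garsia-Rodemich spaces recalled in the introduction. The statement asserts that, for $1<p<\infty$, the generalized construction $GaRo_{L(p,\infty)}$ coincides with the original scale $GaRo_{p}$. The key observation is that both objects are, as sets, equal to the same concrete rearrangement invariant space, namely the Marcinkiewicz space $L(p,\infty)$.

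First I would verify that $L(p,\infty)$ is an admissible input for Definition~\ref{def::duca} and Theorem~\ref{teogrande}: for $1<p<\infty$ the Marcinkiewicz space $L(p,\infty)$ is a rearrangement invariant space whose Boyd indices both equal $1/p$, which lies in the open interval $(0,1)$ (this is exactly the class singled out in the footnote after~(\ref{bruni}), and conditions $(A)$ and $(B)$ are the familiar boundedness of $f\mapsto f^{\ast\ast}$ and of the Hardy-type operator $f\mapsto\int_t^1 f(s)\,ds/s$ on $L(p,\infty)$ for $p>1$). Having checked the hypothesis, I would apply Theorem~\ref{teogrande} with $X=L(p,\infty)$ to obtain, as sets,
\[
GaRo_{L(p,\infty)}=L(p,\infty).
\]

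Next I would invoke the classical identification recalled in~(\ref{delaintro2}), which states that for $1<p<\infty$ the original Garsia-Rodemich space satisfies $GaRo_{p}=L(p,\infty)$ as sets (this is the $n$-dimensional result of \cite{milbmo}, also quantified by~(\ref{belaprima}) and~(\ref{bela})). Chaining the two set-equalities gives
\[
GaRo_{L(p,\infty)}=L(p,\infty)=GaRo_{p},
\]
which is precisely the asserted conclusion. Since the statement is phrased as an equality of sets, this suffices; I do not expect to track the equivalence constants, though one could, by combining the norm estimates~(\ref{lola1})--(\ref{lola2}) with~(\ref{belaprima})--(\ref{bela}).

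There is essentially no obstacle here, as all the substantive work is contained in the two results being quoted. The only point that deserves a line of care is confirming that $L(p,\infty)$ genuinely meets the Boyd-index hypothesis of Theorem~\ref{teogrande}; once that is in place, the corollary is a one-line consequence of transitivity of equality. If one wished to be fully self-contained about the indices, I would note that $\|f^{\ast\ast}\|_{L(p,\infty)}\le \tfrac{p}{p-1}\|f\|_{L(p,\infty)}$ for $p>1$ (Hardy's inequality for $f^{\ast\ast}$) gives condition $(A)$, and the analogous weighted-Hardy bound gives condition $(B)$, so that both Boyd indices equal $1/p\in(0,1)$.
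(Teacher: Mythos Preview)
Your proposal is correct and follows essentially the same approach as the paper: verify that $L(p,\infty)$ has Boyd indices in $(0,1)$, apply Theorem~\ref{teogrande} to obtain $GaRo_{L(p,\infty)}=L(p,\infty)$, and then invoke (\ref{delaintro2}) to identify this with $GaRo_{p}$. The paper's proof is just the two-line version of what you wrote, citing \cite{BS} and \cite{boyd} for the Boyd-index fact rather than verifying conditions $(A)$ and $(B)$ explicitly.
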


\begin{proof}
It is well known and easy to see that the $L(p,\infty)$ spaces, $1<p<\infty,$
have Boyd indices in $(0,1)$ (cf. \cite{BS}, \cite{boyd}). Thus, by Theorem
\ref{teogrande}%
\[
GaRo_{L(p,\infty)}=L(p,\infty),
\]
which combined with (\ref{delaintro2}) yields the desired result.
\end{proof}

\begin{remark}
To illustrate the conditions defining $GaRo_{L(p,\infty)}$ and $GaRo_{p},$ we
now give a direct proof of the containment $GaRo_{L(p,\infty)}\subset
GaRo_{p},1<p<\infty.$ We observe that if $\gamma\in\Gamma_{f}^{X}$ then, for
any $\{Q_{i}\}_{i\in I}\in P,$ we have
\begin{align*}%
%TCIMACRO{\dsum \limits_{i\in I}}%
%BeginExpansion
{\displaystyle\sum\limits_{i\in I}}
%EndExpansion
\frac{1}{\left\vert Q_{i}\right\vert }\int_{Q_{i}}\int_{Q_{i}}\left\vert
f(x)-f(y)\right\vert dxdy  &  \leq%
%TCIMACRO{\dsum \limits_{i\in I}}%
%BeginExpansion
{\displaystyle\sum\limits_{i\in I}}
%EndExpansion
\int_{Q_{i}}\left\vert \gamma(x)\right\vert dx\\
&  \leq\int_{Q_{i}}^{\sum_{i\in I}\left\vert Q_{i}\right\vert }\gamma^{\ast
}(s)ds\\
&  \leq\left\Vert \gamma\right\Vert _{L(p,\infty)}\int_{Q_{i}}^{\sum_{i\in
I}\left\vert Q_{i}\right\vert }s^{-1/p}ds\\
&  =p^{\prime}\left\Vert \gamma\right\Vert _{L(p,\infty)}\left(  \sum_{i\in
I}\left\vert Q_{i}\right\vert \right)  ^{1/p^{\prime}}.
\end{align*}
It follows that
\[
\left\Vert f\right\Vert _{GaRo_{p}}\leq p^{\prime}\left\Vert f\right\Vert
_{GaRo_{L(p,\infty)}}.
\]

\end{remark}

\begin{remark}
It is also instructive to compare (\ref{vale1}) with the rearrangement
inequalities in \cite[Theorem 5 (ii)]{milbmo}. For this purpose note that if
$X=L(p,\infty),$ $1<p<\infty,$ then $\gamma\in X$ implies that%
\[
\gamma^{\ast\ast}(t)\leq c_{p}\left\Vert \gamma\right\Vert _{L(p,\infty
)}t^{-1/p},t>0.
\]
Combining the last estimate with (\ref{vale1}), we see that if $f\in
GaRo_{L(p,\infty)},\gamma\in L(p,\infty),$ then%
\[
t^{1/p}\left(  f^{\ast\ast}(t)-f^{\ast}(t)\right)  \leq c_{p}\left\Vert
\gamma\right\Vert _{L(p,\infty)},\text{ for all }t<1/4.
\]
Compare with \cite[Theorem 5 (ii)]{milbmo}.
\end{remark}

\section{Garsia-Rodemich Spaces and Fractional Sobolev
spaces\label{secc::fractional}}

As we have shown in \cite{mil1}, the Garsia-Rodemich formulation of
Marcinkiewicz spaces leads to an easy approach to the self-improvement of
(weak type) Poincar\'{e}-Sobolev inequalities. In this section we discuss
fractional Sobolev inequalities. First we use ideas from \cite{bren},
\cite{bbm}) to prove weak type embeddings\footnote{The method is amenable of
extensions to a much more general context that we shall not pursue here.} of
fractional Sobolev spaces (cf. Subsection \ref{subsecc:weak}). Since strong
type inequalities can be then obtained by the well known method of truncation
of Maz'ya we will not address the issue here. Instead, in Subsection
\ref{subsecc:strong} we take a different approach. Using the generalized
Gagliardo seminorms and generalized Sobolev spaces defined in the Introduction
(cf. (\ref{deformad}) and (\ref{deformada}) above), combined with known
estimates for Riesz potentials, and Theorem \ref{teogrande}, we obtain
embeddings of Fractional Sobolev spaces based on rearrangement invariant spaces.

\subsection{Weak type inequalities\label{subsecc:weak}}

Let $\alpha\in(0,1),$ $1<p<\infty.$ We shall consider the $W^{\alpha,p}$
spaces introduced in the Introduction (cf. (\ref{frac}) and (\ref{normfrac}) above).

\begin{theorem}
\label{alvarado}(i) Let $1<p<\frac{n}{\alpha},$ $\frac{1}{q}=\frac{1}{p}%
-\frac{\alpha}{n}.$ Then,
\begin{equation}
\left\Vert f\right\Vert _{GaRo_{q}}\leq n^{\frac{(n+\alpha p)}{p2}}\left\Vert
f\right\Vert _{W^{\alpha,p}}. \label{limite}%
\end{equation}

(ii) If $p=\frac{n}{\alpha},$ then%
\[
\left\Vert f\right\Vert _{GaRo_{\infty}}\leq n^{\alpha}\left\Vert f\right\Vert
_{W^{\alpha,\frac{n}{\alpha}}}.
\]

\end{theorem}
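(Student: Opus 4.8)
The plan is to reduce the inequality to a single-cube bound and then sum. Since both $\left\Vert \cdot\right\Vert _{GaRo_{q}}$ and $\left\Vert \cdot\right\Vert _{W^{\alpha,p}}$ are built from double integrals of $|f(x)-f(y)|$, the natural tool is H\"older's inequality applied with the weight $|x-y|^{n+\alpha p}$ that appears in the fractional seminorm. Fix a subcube $Q$ of side $\ell$, so that $|Q|=\ell^{n}$ and $|x-y|\leq\sqrt{n}\,\ell$ for $x,y\in Q$. Writing $|f(x)-f(y)|=|x-y|^{-(n+\alpha p)/p}|f(x)-f(y)|\cdot|x-y|^{(n+\alpha p)/p}$ and applying H\"older with exponents $p,p'$ gives
\[
\int_{Q}\int_{Q}|f(x)-f(y)|\,dx\,dy\leq\Big(\int_{Q}\int_{Q}\frac{|f(x)-f(y)|^{p}}{|x-y|^{n+\alpha p}}\,dx\,dy\Big)^{1/p}\Big(\int_{Q}\int_{Q}|x-y|^{\frac{n+\alpha p}{p-1}}\,dx\,dy\Big)^{1/p'}.
\]
Bounding the second factor by $|x-y|\leq\sqrt{n}\,\ell$ produces a constant $n^{(n+\alpha p)/(2p)}$ and a power of $\ell$; after dividing through by $|Q|$ the power simplifies to $\ell^{\alpha+n/p'}=|Q|^{\alpha/n+1/p'}$. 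Setting $S(Q)^{p}=\int_{Q}\int_{Q}|x-y|^{-(n+\alpha p)}|f(x)-f(y)|^{p}\,dx\,dy$, this yields the single-cube estimate $\frac{1}{|Q|}\int_{Q}\int_{Q}|f(x)-f(y)|\,dx\,dy\leq n^{(n+\alpha p)/(2p)}\,S(Q)\,|Q|^{\alpha/n+1/p'}$.

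For the summation step, fix $\{Q_{i}\}\in P$, sum the single-cube bound, and apply H\"older in the index $i$ with exponents $p,p'$. Because the cubes have pairwise disjoint interiors and $Q_{i}\times Q_{i}\subset Q_{0}\times Q_{0}$, one has $\sum_{i}S(Q_{i})^{p}\leq\left\Vert f\right\Vert _{W^{\alpha,p}}^{p}$, so it remains to control $\big(\sum_{i}|Q_{i}|^{(\alpha/n+1/p')p'}\big)^{1/p'}$. Here $(\alpha/n+1/p')p'=1+\alpha p'/n>1$, so the elementary superadditivity $\sum_{i}a_{i}^{1+\theta}\leq(\sum_{i}a_{i})^{1+\theta}$ (valid for $\theta>0$, $a_{i}\geq0$) gives $\big(\sum_{i}|Q_{i}|^{1+\alpha p'/n}\big)^{1/p'}\leq(\sum_{i}|Q_{i}|)^{1/p'+\alpha/n}$. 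Since $\frac{1}{q}=\frac{1}{p}-\frac{\alpha}{n}$ is equivalent to $\frac{1}{q'}=\frac{1}{p'}+\frac{\alpha}{n}$, the right-hand side equals $(\sum_{i}|Q_{i}|)^{1/q'}$, which is precisely the denominator in the definition of the $GaRo_{q}$ norm. Taking the supremum over $P$ then establishes (i) with the stated constant.

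The main obstacle is the exponent bookkeeping: one must check that the power of $\ell$ surviving the single-cube H\"older estimate and the power of $|Q_{i}|$ generated by the second H\"older exponent combine, through the identity $\frac{1}{q'}=\frac{1}{p'}+\frac{\alpha}{n}$, into exactly $(\sum_{i}|Q_{i}|)^{1/q'}$; the superadditivity inequality is the step where the critical Sobolev relation between $p$ and $q$ is consumed. Part (ii) is the endpoint $p=\frac{n}{\alpha}$, $q=\infty$, $\frac{1}{q'}=1$: the identical computation applies verbatim, and since $n+\alpha p=2n$ in this case the constant reduces to $n^{(n+\alpha p)/(2p)}=n^{\alpha}$, so the $GaRo_{\infty}$ (that is, $BMO$) bound follows with no additional work.
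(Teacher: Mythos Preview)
Your proof is correct and follows essentially the same route as the paper: a single-cube H\"older estimate using the weight $|x-y|^{(n+\alpha p)/p}$ and the diameter bound $|x-y|\leq\sqrt{n}\,\ell$, followed by H\"older in the index $i$ and the superadditivity $\sum a_i^{1+\theta}\leq(\sum a_i)^{1+\theta}$, with the exponent identity $\tfrac{1}{q'}=\tfrac{1}{p'}+\tfrac{\alpha}{n}$ closing the argument. The only cosmetic difference is that the paper first applies the diameter bound and then H\"older (obtaining the factor $|Q_i|^{2/p'}$ from the constant weight), whereas you apply H\"older first and then bound the resulting integral $\int_{Q}\int_{Q}|x-y|^{(n+\alpha p)/(p-1)}\,dx\,dy$; the two orderings give the same constant and the same power of $|Q|$, and part (ii) follows in both versions by the substitution $p=n/\alpha$.
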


\begin{proof}
(i) Let $f\in W^{\alpha,p},$ and let $\{Q_{i}\}_{i\in I}$ be an arbitrary
element of $\tilde{P}.$ Let%
\[
A=%
%TCIMACRO{\dsum \limits_{i\in I}}%
%BeginExpansion
{\displaystyle\sum\limits_{i\in I}}
%EndExpansion
\frac{1}{\left\vert Q_{i}\right\vert }\int_{Q_{i}}\int_{Q_{i}}\left\vert
f(x)-f(y)\right\vert dxdy.
\]
Estimating the distance between two points of $Q_{i}$ by the diameter of
$Q_{i}$ we find that for any $r>0,$
\begin{align}
\frac{1}{\left\vert Q_{i}\right\vert }\int_{Q_{i}}\int_{Q_{i}}\left\vert
f(x)-f(y)\right\vert dxdy  &  =\frac{1}{\left\vert Q_{i}\right\vert }%
\int_{Q_{i}}\int_{Q_{i}}\frac{\left\vert f(x)-f(y)\right\vert }{\left\vert
x-y\right\vert ^{r}}\left\vert x-y\right\vert ^{r}dxdy\\
&  \leq\frac{n^{r/2}\left\vert Q_{i}\right\vert ^{r/n}}{\left\vert
Q_{i}\right\vert }\int_{Q_{i}}\int_{Q_{i}}\frac{\left\vert
f(x)-f(y)\right\vert }{\left\vert x-y\right\vert ^{r}}dxdy. \label{sumar}%
\end{align}
Let $r=\frac{(n+\alpha p)}{p}$, then summing (\ref{sumar}) and applying
H\"{o}lder's inequality (twice) we obtain%
\begin{align}
A  &  \leq%
%TCIMACRO{\dsum \limits_{i\in I}}%
%BeginExpansion
{\displaystyle\sum\limits_{i\in I}}
%EndExpansion
\frac{n^{r/2}\left\vert Q_{i}\right\vert ^{r/n}}{\left\vert Q_{i}\right\vert
}\int_{Q_{i}}\int_{Q_{i}}\frac{\left\vert f(x)-f(y)\right\vert }{\left\vert
x-y\right\vert ^{r}}dxdy\nonumber\\
&  \leq n^{r/2}%
%TCIMACRO{\dsum \limits_{i\in I}}%
%BeginExpansion
{\displaystyle\sum\limits_{i\in I}}
%EndExpansion
\left(  \frac{\left\vert Q_{i}\right\vert ^{r/n}}{\left\vert Q_{i}\right\vert
}\left\vert Q_{i}\right\vert ^{2/p^{\prime}}\right)  \left\{  \int_{Q_{i}}%
\int_{Q_{i}}\frac{\left\vert f(x)-f(y)\right\vert ^{p}}{\left\vert
x-y\right\vert ^{n+\alpha p}}dxdy\right\}  ^{1/p}\nonumber\\
&  \leq n^{r/2}\left\{
%TCIMACRO{\dsum \limits_{i\in I}}%
%BeginExpansion
{\displaystyle\sum\limits_{i\in I}}
%EndExpansion
\left\{  \frac{\left\vert Q_{i}\right\vert ^{r/n}}{\left\vert Q_{i}\right\vert
}\left\vert Q_{i}\right\vert ^{2/p^{\prime}}\right\}  ^{p^{\prime}}\right\}
^{1/p^{\prime}}\left\{
%TCIMACRO{\dsum \limits_{i\in I}}%
%BeginExpansion
{\displaystyle\sum\limits_{i\in I}}
%EndExpansion
\int_{Q_{i}}\int_{Q_{i}}\frac{\left\vert f(x)-f(y)\right\vert ^{p}}{\left\vert
x-y\right\vert ^{n+\alpha p}}dxdy\right\}  ^{1/p}\nonumber\\
&  \leq n^{r/2}\left\{
%TCIMACRO{\dsum \limits_{i\in I}}%
%BeginExpansion
{\displaystyle\sum\limits_{i\in I}}
%EndExpansion
\left\{  \frac{\left\vert Q_{i}\right\vert ^{r/n}}{\left\vert Q_{i}\right\vert
}\left\vert Q_{i}\right\vert ^{2/p^{\prime}}\right\}  ^{p^{\prime}}\right\}
^{1/p^{\prime}}\left\Vert f\right\Vert _{W^{\alpha,p}} \label{laotra}%
\end{align}
Now, by computation $\left(  \frac{r}{n}-1\right)  p^{\prime}+2=\frac{\alpha
}{n}p^{\prime}+1,$ and therefore%
\begin{align*}%
%TCIMACRO{\dsum \limits_{i\in I}}%
%BeginExpansion
{\displaystyle\sum\limits_{i\in I}}
%EndExpansion
\left\{  \frac{\left\vert Q_{i}\right\vert ^{r/n}}{\left\vert Q_{i}\right\vert
}\left\vert Q_{i}\right\vert ^{2/p^{\prime}}\right\}  ^{p^{\prime}}  &  =%
%TCIMACRO{\dsum \limits_{i\in I}}%
%BeginExpansion
{\displaystyle\sum\limits_{i\in I}}
%EndExpansion
\left\vert Q_{i}\right\vert ^{(\frac{\alpha}{n}p^{\prime}+1)}\\
&  \leq\left\{
%TCIMACRO{\dsum \limits_{i\in I}}%
%BeginExpansion
{\displaystyle\sum\limits_{i\in I}}
%EndExpansion
\left\vert Q_{i}\right\vert \right\}  ^{(\frac{\alpha}{n}p^{\prime}+1)}.
\end{align*}
Inserting this information in (\ref{laotra}) yields%
\begin{equation}
A\leq n^{r/2}\left\{
%TCIMACRO{\dsum \limits_{i\in I}}%
%BeginExpansion
{\displaystyle\sum\limits_{i\in I}}
%EndExpansion
\left\vert Q_{i}\right\vert \right\}  ^{(\frac{\alpha}{n}p^{\prime}+1)\frac
{1}{p^{\prime}}}\left\Vert f\right\Vert _{W^{\alpha,p}}. \label{laver}%
\end{equation}
Since $(\frac{\alpha}{n}p^{\prime}+1)\frac{1}{p^{\prime}}=\frac{1}{q^{\prime}%
},$ we obtain%
\[
\left\Vert f\right\Vert _{GaRo_{q}}\leq n^{r/2}\left\Vert f\right\Vert
_{W^{\alpha,p}}.
\]

(ii) In the limiting case, $p=\frac{n}{\alpha},$ therefore $p^{\prime}%
=\frac{n}{n-\alpha}$ and we see that $(\frac{\alpha}{n}p^{\prime}%
+1)=p^{\prime};$ consequently, by definition, $\frac{1}{q^{\prime}}=1.$ Let
$r=\frac{2n}{p},$ inserting this information in (\ref{laver}) yields%
\[
A\leq n^{\alpha}\left\{
%TCIMACRO{\dsum \limits_{i\in I}}%
%BeginExpansion
{\displaystyle\sum\limits_{i\in I}}
%EndExpansion
\left\vert Q_{i}\right\vert \right\}  \left\Vert f\right\Vert _{W^{\alpha
,\frac{n}{\alpha}}.}%
\]
Thus,%
\begin{equation}
\left\Vert f\right\Vert _{GaRo_{\infty}}\leq n^{\alpha}\left\Vert f\right\Vert
_{W^{\alpha,\frac{n}{\alpha}}.} \label{laver1}%
\end{equation}

\end{proof}

\begin{remark}
\label{daniel}By private correspondence Daniel Spector observed that with a
minor modification the proof also works in the case $p=1.$ Indeed, if $p=1,$
we let $r=n+\alpha$, and proceed as in the proof of case (i), but now only one
application of H\"{o}lder's inequality is needed to obtain%
\begin{align*}
A  &  \leq n^{(n+\alpha)/2}\sup_{i\in I}\left\{  \left\vert Q_{i}\right\vert
^{\frac{\alpha}{n}}\right\}  \left\{
%TCIMACRO{\dsum \limits_{i\in I}}%
%BeginExpansion
{\displaystyle\sum\limits_{i\in I}}
%EndExpansion
\int_{Q_{i}}\int_{Q_{i}}\frac{\left\vert f(x)-f(y)\right\vert }{\left\vert
x-y\right\vert ^{n+\alpha}}dxdy\right\} \\
&  \leq n^{(n+\alpha)/2}\left\{
%TCIMACRO{\dsum \limits_{i\in I}}%
%BeginExpansion
{\displaystyle\sum\limits_{i\in I}}
%EndExpansion
\left\vert Q_{i}\right\vert \right\}  ^{\frac{\alpha}{n}}\left\Vert
f\right\Vert _{W^{\alpha,1}.}%
\end{align*}
Since $\frac{1}{q}=1-\frac{\alpha}{n}$ we thus have,%
\begin{equation}
\left\Vert f\right\Vert _{GaRo_{q}}\leq n^{(n+\alpha)/2}\left\Vert
f\right\Vert _{W^{\alpha,1}}. \label{obtenida}%
\end{equation}

Alternatively the inequality (\ref{obtenida}) can be obtained letting
$p\rightarrow1$ in (\ref{limite}).
\end{remark}

\begin{remark}
Note that starting with (\ref{sumar}) applied to a cube $Q$ with $r=\frac
{2n}{p},$ and then applying H\"{o}lder's inequality, yields%
\[
\frac{1}{\left\vert Q\right\vert ^{2}}\int_{Q}\int_{Q}\left\vert
f(x)-f(y)\right\vert dxdy\leq\frac{n^{\alpha}\left\vert Q\right\vert ^{2/p}%
}{\left\vert Q\right\vert ^{2}}\left\vert Q\right\vert ^{2/p^{\prime}}\left(
\int_{Q}\int_{Q}\frac{\left\vert f(x)-f(y)\right\vert ^{p}}{\left\vert
x-y\right\vert ^{2n}}dxdy\right)  ^{1/p},
\]
and (\ref{laver1}) follows from the fact that%
\[
\left\Vert f\right\Vert _{GaRo_{\infty}}=\left\Vert f\right\Vert _{BMO}%
\simeq\sup_{Q\subset Q_{0}}\frac{1}{\left\vert Q\right\vert ^{2}}\int_{Q}%
\int_{Q}\left\vert f(x)-f(y)\right\vert dxdy.
\]
This approach to (\ref{laver1}) is classical (cf. \cite{bren}); the use of the
Garsia-Rodemich spaces unifies the proof of (i) and (ii).
\end{remark}

\subsection{Strong type inequalities\label{subsecc:strong}}

The characterization of rearrangement invariant spaces $X$ with indices lying
on $(0,1)$ as $GaRo_{X}$ spaces provided by Theorem \ref{teogrande} allows to
unify the proofs of the weak and strong type Sobolev inequalities in the
general context of rearrangement invariant spaces. For other treatments of
fractional Sobolev inequalities we refer to \cite{bbm2}, \cite{beckner},
\cite{ponce}, \cite{fran}, \cite{xiao} and the references therein.

Let $\alpha\in(0,1),$ $1\leq p<\infty$. Recall the definition of the main
objects of study:
\[
D_{p,\alpha}(f)(y)=\left\{  \int_{Q_{0}}\frac{\left\vert f(x)-f(y)\right\vert
^{p}}{\left\vert x-y\right\vert ^{n+\alpha p}}dx\right\}  ^{1/p},y\in Q_{0}.
\]%
\[
W_{p,Y}^{\alpha}:=W_{p,Y}^{\alpha}(Q_{0}):=\{f:\left\Vert D_{p,\alpha
}(f)\right\Vert _{Y}<\infty\}.
\]

\[
I_{\alpha,Q_{0}}(f)(x)=\int_{Q_{0}}\frac{f(y)}{\left\vert x-y\right\vert
^{n-\alpha}}dy,x\in Q_{0}.
\]

Our main result reads as follows

\begin{theorem}
\label{alvarado2}Let $\alpha\in(0,1),1<p<\frac{n}{\alpha},$ $\frac{1}{q}%
=\frac{1}{p}-\frac{\alpha}{n}.$ Let $X$ and $Y$ be rearrangement invariant
spaces such that $I_{\alpha,Q_{0}}$ is a bounded map, $I_{\alpha,Q_{0}%
}:Y\rightarrow X.$ Then,%
\[
W_{p,Y}^{\alpha}\subset GaRo_{X}.
\]

\end{theorem}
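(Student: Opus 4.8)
The goal is to show $W_{p,Y}^{\alpha}\subset GaRo_X$. By Definition \ref{def::duca}, this means that for each $f\in W_{p,Y}^{\alpha}$ I must exhibit a function $\gamma\in X$ such that inequality (\ref{dulfa}) holds for every family $\{Q_i\}_{i\in I}\in P$. The hint in the Introduction tells me exactly which $\gamma$ to use: the plan is to take $\gamma=c\,I_{\alpha,Q_0}(D_{p,\alpha}(f))$ for a suitable constant $c=c(n,\|I_{\alpha,Q_0}\|_{Y\to X})$. Since $D_{p,\alpha}(f)\in Y$ by the definition of $W_{p,Y}^{\alpha}$, and since $I_{\alpha,Q_0}:Y\to X$ is bounded by hypothesis, this candidate $\gamma$ automatically lies in $X$. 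So the membership $\gamma\in X$ is free; the entire content lies in verifying the oscillation inequality (\ref{dulfa}).

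The main step is therefore a pointwise/cubewise estimate: I need to bound the local oscillation on each cube $Q_i$ by an integral of $\gamma$ over $Q_i$. Fix a cube $Q$ of sidelength $\ell$, so $|Q|=\ell^n$ and $\operatorname{diam}(Q)\le \sqrt{n}\,\ell$. For $x,y\in Q$ I would write, exactly as in the proof of Theorem \ref{alvarado},
\[
|f(x)-f(y)|=\frac{|f(x)-f(y)|}{|x-y|^{(n+\alpha p)/p}}\,|x-y|^{(n+\alpha p)/p},
\]
estimate $|x-y|^{(n+\alpha p)/p}\le (\sqrt{n}\,\ell)^{(n+\alpha p)/p}$, and then integrate in $x$ over $Q$ and apply Hölder in $x$ (with exponents $p,p'$) to produce a factor $D_{p,\alpha}(f)(y)$. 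The key observation is that for $y\in Q$ and $x\in Q$ one has $|x-y|^{-(n-\alpha)}$ controlled from below by a constant times $\ell^{-(n-\alpha)}$, so that averaging $\frac{1}{|Q|}\int_Q |x-y|^{-(n-\alpha)}\,dx$ can be compared with the local Riesz kernel; this is what lets me replace the geometric factor $\ell^{\#}$ by an honest integral $\int_Q \frac{D_{p,\alpha}(f)(x)}{|x-y|^{n-\alpha}}\,dx = I_{\alpha,Q}(D_{p,\alpha}(f))(y)$, which is dominated pointwise by $I_{\alpha,Q_0}(D_{p,\alpha}(f))(y)$. Tracking the exponents of $\ell$ carefully, so that the powers of $|Q|$ cancel, I expect to arrive at
\[
\frac{1}{|Q|}\int_Q\int_Q |f(x)-f(y)|\,dx\,dy \le c(n)\int_Q I_{\alpha,Q_0}(D_{p,\alpha}(f))(y)\,dy = \int_Q \gamma(y)\,dy,
\]
with $\gamma=c(n)\,I_{\alpha,Q_0}(D_{p,\alpha}(f))$. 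Summing over $i\in I$ then gives precisely (\ref{dulfa}).

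The hard part will be the bookkeeping of the exponents of $\ell=|Q|^{1/n}$ to confirm that the geometric prefactor produced by bounding $\operatorname{diam}(Q)$ combines with the kernel lower bound to yield exactly the local Riesz potential with no leftover power of $|Q|$; this is where the relation $\frac{1}{q}=\frac{1}{p}-\frac{\alpha}{n}$ and the specific choice $r=(n+\alpha p)/p$ from Theorem \ref{alvarado} must be matched against the $n-\alpha$ in the Riesz kernel. Once the cubewise estimate is established with a dimensional constant $c(n)$, I conclude that $\gamma=c(n)I_{\alpha,Q_0}(D_{p,\alpha}(f))\in\Gamma_f^X$, and boundedness of $I_{\alpha,Q_0}:Y\to X$ gives
\[
\|f\|_{GaRo_X}\le \|\gamma\|_X \le c(n)\,\|I_{\alpha,Q_0}\|_{Y\to X}\,\|D_{p,\alpha}(f)\|_Y = c\,\|f\|_{W_{p,Y}^{\alpha}},
\]
which establishes the continuous embedding $W_{p,Y}^{\alpha}\subset GaRo_X$. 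Combined with Theorem \ref{teogrande}, this upgrades at once to $W_{p,Y}^{\alpha}\subset X$ whenever the Boyd indices of $X$ lie in $(0,1)$.
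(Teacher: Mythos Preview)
Your proposal is correct and follows essentially the same route as the paper's proof: produce $D_{p,\alpha}(f)$ via H\"older/Jensen, absorb the leftover power $|Q|^{\alpha/n}$ by reintroducing an average over $Q$ and bounding $\ell^{-(n-\alpha)}\le c_n|y-z|^{-(n-\alpha)}$, and then extend to $Q_0$ to obtain $I_{\alpha,Q_0}(D_{p,\alpha}(f))$ as the majorant $\gamma$. The only cosmetic difference is that the paper applies Jensen on the inner integral and names the dummy variable $z$ explicitly, whereas you apply H\"older and reuse the letters $x,y$; the exponent bookkeeping you flag as ``the hard part'' works out exactly as you anticipate, with $|Q|^{(n+\alpha p)/(np)-1/p}=|Q|^{\alpha/n}$ cancelling against the $|Q|^{(\alpha-n)/n}$ coming from the Riesz kernel lower bound.
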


\begin{proof}
Let $f\in W_{p,Y}^{\alpha},$ and let $\{Q_{i}\}_{i\in I}$ be an arbitrary
element of $\tilde{P}.$ Let%
\[
A=%
%TCIMACRO{\dsum \limits_{i\in I}}%
%BeginExpansion
{\displaystyle\sum\limits_{i\in I}}
%EndExpansion
\frac{1}{\left\vert Q_{i}\right\vert }\int_{Q_{i}}\int_{Q_{i}}\left\vert
f(x)-f(y)\right\vert dxdy.
\]

Using Jensen's inequality on the inner integral we find that%
\begin{align}
A  &  \leq%
%TCIMACRO{\dsum \limits_{i\in I}}%
%BeginExpansion
{\displaystyle\sum\limits_{i\in I}}
%EndExpansion
\int_{Q_{i}}\left(  \frac{1}{\left\vert Q_{i}\right\vert }\int_{Q_{i}%
}\left\vert f(x)-f(y)\right\vert ^{p}dx\right)  ^{1/p}dy\nonumber\\
&  \leq n^{(n+\alpha p)/2p}%
%TCIMACRO{\dsum \limits_{i\in I}}%
%BeginExpansion
{\displaystyle\sum\limits_{i\in I}}
%EndExpansion
\int_{Q_{i}}\left(  \frac{\left\vert Q_{i}\right\vert ^{\frac{n+\alpha p}{n}}%
}{\left\vert Q_{i}\right\vert }\int_{Q_{i}}\frac{\left\vert
f(x)-f(y)\right\vert ^{p}}{\left\vert x-y\right\vert ^{n+\alpha p}}dx\right)
^{1/p}dy\nonumber\\
&  \leq n^{(n+\alpha p)/2p}%
%TCIMACRO{\dsum \limits_{i\in I}}%
%BeginExpansion
{\displaystyle\sum\limits_{i\in I}}
%EndExpansion
\frac{\left\vert Q_{i}\right\vert ^{\frac{n+\alpha p}{np}}}{\left\vert
Q_{i}\right\vert ^{1/p}}\int_{Q_{i}}\left(  \int_{Q_{i}}\frac{\left\vert
f(x)-f(y)\right\vert ^{p}}{\left\vert x-y\right\vert ^{n+\alpha p}}dx\right)
^{1/p}dy\nonumber\\
&  \leq n^{(n+\alpha p)/2p}%
%TCIMACRO{\dsum \limits_{i\in I}}%
%BeginExpansion
{\displaystyle\sum\limits_{i\in I}}
%EndExpansion
\frac{\left\vert Q_{i}\right\vert ^{\frac{n+\alpha p}{np}}}{\left\vert
Q_{i}\right\vert ^{1/p}}\int_{Q_{i}}\left(  \int_{Q_{0}}\frac{\left\vert
f(x)-f(y)\right\vert ^{p}}{\left\vert x-y\right\vert ^{n+\alpha p}}dx\right)
^{1/p}dy\nonumber\\
&  =n^{(n+\alpha p)/2p}%
%TCIMACRO{\dsum \limits_{i\in I}}%
%BeginExpansion
{\displaystyle\sum\limits_{i\in I}}
%EndExpansion
\frac{\left\vert Q_{i}\right\vert ^{\frac{n+\alpha p}{np}}}{\left\vert
Q_{i}\right\vert ^{1/p}}\int_{Q_{i}}D_{p,\alpha}(f)(y)dy\nonumber\\
&  =n^{(n+\alpha p)/2p}%
%TCIMACRO{\dsum \limits_{i\in I}}%
%BeginExpansion
{\displaystyle\sum\limits_{i\in I}}
%EndExpansion
\left\vert Q_{i}\right\vert ^{\frac{n+\alpha p}{np}-\frac{1}{p}}\left\vert
Q_{i}\right\vert ^{-1}\left\vert Q_{i}\right\vert \int_{Q_{i}}D_{p,\alpha
}(f)(y)dy\nonumber\\
&  =n^{(n+\alpha p)/2p}%
%TCIMACRO{\dsum \limits_{i\in I}}%
%BeginExpansion
{\displaystyle\sum\limits_{i\in I}}
%EndExpansion
\left\vert Q_{i}\right\vert ^{\frac{n+\alpha p}{np}-\frac{1}{p}-1}\int_{Q_{i}%
}\int_{Q_{i}}D_{p,\alpha}(f)(y)dydz\nonumber\\
&  =n^{(n+\alpha p)/2p}%
%TCIMACRO{\dsum \limits_{i\in I}}%
%BeginExpansion
{\displaystyle\sum\limits_{i\in I}}
%EndExpansion
\left\vert Q_{i}\right\vert ^{\frac{\alpha}{n}-1}\int_{Q_{i}}\int_{Q_{i}%
}D_{p,\alpha}(f)(y)dydz\nonumber\\
&  \leq n^{(n+\alpha p)/2p}n^{\frac{n-\alpha}{2}}%
%TCIMACRO{\dsum \limits_{i\in I}}%
%BeginExpansion
{\displaystyle\sum\limits_{i\in I}}
%EndExpansion
\int_{Q_{i}}\int_{Q_{i}}\frac{D_{p,\alpha}(f)(y)}{\left\vert y-z\right\vert
^{n-\alpha}}dydz\nonumber\\
&  \leq n^{(n+\alpha p)/2p}n^{\frac{n-\alpha}{2}}%
%TCIMACRO{\dsum \limits_{i\in I}}%
%BeginExpansion
{\displaystyle\sum\limits_{i\in I}}
%EndExpansion
\int_{Q_{i}}\int_{Q_{0}}\frac{D_{p,\alpha}(f)(y)}{\left\vert y-z\right\vert
^{n-\alpha}}dydz\nonumber\\
&  =C_{n}%
%TCIMACRO{\dsum \limits_{i\in I}}%
%BeginExpansion
{\displaystyle\sum\limits_{i\in I}}
%EndExpansion
\int_{Q_{i}}I_{\alpha,Q_{0}}(D_{p,\alpha}(f))(z)dz. \label{tipica}%
\end{align}
Moreover, by assumption,%
\begin{align*}
\left\Vert I_{\alpha,Q_{0}}(D_{p,\alpha}(f))\right\Vert _{X}  &
\leq\left\Vert I_{\alpha,Q_{0}}\right\Vert _{Y\rightarrow X}\left\Vert
D_{p,\alpha}(f)\right\Vert _{Y}\\
&  =\left\Vert I_{\alpha,Q_{0}}\right\Vert _{Y\rightarrow X}\left\Vert
f\right\Vert _{W_{p,Y}^{\alpha}}.
\end{align*}
Combining this fact with (\ref{tipica}) we see that $C_{n}I_{\alpha,Q_{0}%
}(D_{p,\alpha}(f))\in\Gamma_{f}^{X},$ and%
\[
\left\Vert f\right\Vert _{GaRo_{X}}\leq C_{n}\left\Vert I_{\alpha,Q_{0}%
}\right\Vert _{Y\rightarrow X}\left\Vert f\right\Vert _{W_{p,Y}^{\alpha}},
\]
as we wished to show.
\end{proof}

\begin{corollary}
Suppose that all the assumptions of Theorem \ref{alvarado2} hold and,
furthermore, that $X$ has Boyd indices lying in $(0,1).$ Then,%
\[
W_{p,Y}^{\alpha}\subset X.
\]

\end{corollary}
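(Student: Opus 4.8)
The plan is simply to compose the two main results already established: Theorem \ref{alvarado2} from this subsection and Theorem \ref{teogrande} from Section \ref{secc::general}. The hypotheses of the corollary are exactly those of Theorem \ref{alvarado2}, augmented only by the requirement that the Boyd indices of $X$ lie in $(0,1)$, which is precisely the condition under which Theorem \ref{teogrande} applies. So the corollary should follow by concatenating two inclusions, and I do not expect any genuine obstacle.

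First I would invoke Theorem \ref{alvarado2} to obtain the continuous embedding $W_{p,Y}^{\alpha}\subset GaRo_{X}$, together with the quantitative bound
\[
\left\Vert f\right\Vert _{GaRo_{X}}\leq C_{n}\left\Vert I_{\alpha,Q_{0}}\right\Vert _{Y\rightarrow X}\left\Vert f\right\Vert _{W_{p,Y}^{\alpha}},
\]
valid for every $f\in W_{p,Y}^{\alpha}.$ Next, since $X$ has Boyd indices in $(0,1),$ Theorem \ref{teogrande} gives $GaRo_{X}=X$ as sets, along with the estimate (\ref{lola2}), that is, $\left\Vert f-f_{Q_{0}}\right\Vert _{X}\leq c(X)\left\Vert f\right\Vert _{GaRo_{X}}.$ Chaining the two inequalities yields
\[
\left\Vert f-f_{Q_{0}}\right\Vert _{X}\leq c(X)C_{n}\left\Vert I_{\alpha,Q_{0}}\right\Vert _{Y\rightarrow X}\left\Vert f\right\Vert _{W_{p,Y}^{\alpha}}.
\]

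The only point that deserves a word of care is that both the seminorm $D_{p,\alpha}$ and the $GaRo_{X}$ condition depend only on the differences $f(x)-f(y),$ hence are insensitive to additive constants; what the chain above controls is therefore $f-f_{Q_{0}}$ rather than $f$ itself. To conclude that $f\in X,$ I would observe that the constant function $f_{Q_{0}}$ belongs to $X,$ since on the finite measure cube $Q_{0}$ one has $L^{\infty}\subset X$ for every rearrangement invariant space $X.$ Writing $f=(f-f_{Q_{0}})+f_{Q_{0}}$ then places $f$ in $X,$ which establishes the inclusion $W_{p,Y}^{\alpha}\subset X$ and completes the proof.
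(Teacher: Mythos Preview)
Your proof is correct and follows exactly the paper's approach: apply Theorem \ref{alvarado2} to get $W_{p,Y}^{\alpha}\subset GaRo_{X}$, then invoke Theorem \ref{teogrande} to identify $GaRo_{X}=X$. Your extra paragraph handling the additive constant $f_{Q_{0}}$ is a reasonable elaboration, but note that it is already absorbed in the statement $GaRo_{X}=X$ of Theorem \ref{teogrande}.
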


\begin{proof}
Applying successively Theorem \ref{alvarado2} and Theorem \ref{teogrande} we
obtain%
\[
W_{p,Y}^{\alpha}\subset GaRo_{X}=X.
\]

\end{proof}

\begin{example}
\label{tordo}The Lorentz spaces $L(s,r),1<s<\infty,1\leq r\leq\infty,$ are
defined by the condition%
\[
\left\Vert f\right\Vert _{L(s,r)}=\left\{  \int_{0}^{\infty}\left(
f^{\ast\ast}(u)u^{1/s}\right)  ^{r}\frac{du}{u}\right\}  ^{1/r}<\infty.
\]
It is well known (cf. \cite{oneil}) that if $1<p<\frac{n}{\alpha},$ $\frac
{1}{q}=\frac{1}{p}-\frac{\alpha}{n},1\leq r_{1}\leq r_{2}\leq\infty,$ then
$I_{\alpha,Q_{0}}:L(p,r_{1})\rightarrow L(q,r_{2}),$is a bounded map. Then we
can conclude that\footnote{In particular, if we let $r_{1}=p<r_{2}=\infty,$ we
recover (\ref{srbenson}).}
\[
W_{p,L(p,r_{1})}^{\alpha}\subset GaRo_{L(q,r_{2})}=L(q,r_{2}).
\]

\end{example}

\begin{example}
The previous discussion shows that%
\begin{equation}
\left\Vert f\right\Vert _{L^{q}}\leq c(q)\left\Vert f\right\Vert
_{GaRo_{L^{q}}}\leq c(q)C_{n}\left\Vert I_{\alpha,Q_{0}}\right\Vert
_{L^{p}\rightarrow L^{q}}\left\Vert f\right\Vert _{W_{p,L^{p}}^{\alpha}}.
\label{erminia}%
\end{equation}
Note that since%
\[
W_{p,L^{p}}^{\alpha}=W^{\alpha,p},
\]
(\ref{erminia}) gives the corresponding strong type inequalities of Theorem
\ref{alvarado}.
\end{example}

\begin{example}
\label{bonafide}The end point inequalities for local Riesz potentials that
were obtained in \cite{brew} can be also implemented here. For example, when
$p=\frac{n}{\alpha},$ we have (cf. \cite[Theorem 2]{brew}), $I_{\alpha,Q_{0}%
}:L(\frac{n}{\alpha},\frac{n}{\alpha})\rightarrow BW_{n/\alpha},$ where%
\[
BW_{n/\alpha}=\{f:\left\Vert f\right\Vert _{BW_{n/\alpha}}=\left\{  \int%
_{0}^{1}\left(  \frac{f^{\ast}(t)}{(1+\log\frac{1}{t})}\right)  ^{n/\alpha
}\frac{dt}{t}\right\}  ^{\alpha/n}<\infty\}.
\]
As a consequence, we obtain the following fractional version of the well known
Brezis-Wainger inequality \cite{brew},%
\[
W_{\frac{n}{\alpha}}^{\alpha}=W_{\frac{n}{\alpha},L(\frac{n}{\alpha},\frac
{n}{\alpha})}^{\alpha}\subset GaRo_{BW_{n/\alpha}}.
\]

\end{example}

\section{Final Remarks}

Finally we comment briefly on some loose ends that we are leaving for future work.

\begin{itemize}
\item We should mention the interesting work by
Ambrosio-Bourgain-Brezis-Figalli (cf. \cite{bre1}) on isotropic versions of
\textbf{B }and their use in the computation of perimeters of sets. What is the
connection with scalings of Garsia-Rodemich conditions and the limits
fractional Sobolev norms (cf. \cite{fusco}, \cite{ps}, \cite{fraccionario})?

\item It could be interesting to explore the role of Garsia-Rodemich
conditions in solving the problem of proving dimension free versions of the
John-Nirenberg inequality (cf. \cite{cwikel}).

\item As it often happens in mathematics, the generalization of the
Garsia-Rodemich condition developed in this paper makes it easier to
understand the connection with other constructions. In a forthcoming joint
paper with Sergey Astashkin \cite{asmi} we connect the Garsia-Rodemich spaces
to the Fefferman-Stein inequalities and interpolation theory.
\end{itemize}

\end{document}